\newtheorem{theorem}{Theorem}[section]
\newtheorem{definition}[theorem]{Definition}
\newtheorem{lemma}[theorem]{Lemma}
\newtheorem{proposition}[theorem]{Proposition}
\newtheorem{corollary}[theorem]{Corollary}
\theoremstyle{definition}
\newtheorem{remark}[theorem]{Remark}
\newtheorem{question}{Question}
\newcommand{\FF}{\mathbb{F}}
\newcommand{\QQ}{\mathbb{Q}}
\newcommand{\RR}{\mathbb{R}}
\newcommand{\ZZ}{\mathbb{Z}}
\newcommand{\CC}{\mathbb{C}}
\newcommand{\G}{\mathcal{G}}
\newcommand{\zer}{\mathscr{Z}}
\newcommand{\reg}{\mathscr{R}}
\newcommand{\1}{\mathbf{1}}
\newcommand{\p}{\mathfrak{p}}
\newcommand{\C}{\mathcal{C}}
\newcommand{\qgraph}[1]{{\mathsf{QG} }_{#1}}
\newcommand{\hgraph}[1]{{\mathsf{HG} }(#1)}
\newcommand{\Q}{\mathcal{Q}}
\newcommand{\HH}{\mathcal{H}}
\newcommand{\bchi}{\chi_{_\mathrm{Bor}}}
\newcommand{\fword}[1]{{\it{#1}}}
\renewcommand{\d}[1]{\mathrm{d}#1}
\DeclareMathOperator{\Cay}{Cay}
\DeclareMathOperator{\GL}{GL}
\DeclareMathOperator{\M}{M}
\DeclareMathOperator{\md}{mod}
\title{On a generalization of the Hadwiger-Nelson problem}
\author{Mohammad Bardestani,   
Keivan Mallahi-Karai
}
\newcommand{\Addresses}{{
  \bigskip
  \footnotesize

M.~Bardestani, \textsc{Department of Mathematics and Statistics, University of Ottawa, 585 King Edward, Ottawa, ON K1N 6N5, Canada.}\par\nopagebreak \textit{E-mail address:} { \tt mbardest@uottawa.ca} 
 
  \medskip

  K.~Mallahi-Karai, \textsc{Jacobs University Bremen, Campus Ring I, 28759 Bremen, Germany.}\par\nopagebreak \textit{E-mail address:} { \tt k.mallahikarai@jacobs-university.de} 
  }}
\begin{document}
\maketitle
\begin{abstract}
For a field $F$ and a quadratic form $Q$ defined on an $n$-dimensional vector space $V$ over $F$, let
$\qgraph{Q}$, called the quadratic graph associated to $Q$, be the graph with the vertex set $V$ where vertices $u,w \in V$ form an edge if and only if $Q(v-w)=1$. Quadratic graphs can be viewed as natural generalizations of the unit-distance graph featuring in the famous Hadwiger-Nelson problem. In the present paper, we will prove that for a local field $F$ of characteristic zero, the Borel chromatic number of $\qgraph{Q}$ is infinite if and only if $Q$ represents zero non-trivially over $F$. The proof employs a recent spectral bound for the Borel chromatic number of Cayley graphs, combined with an analysis of certain oscillatory integrals over local fields. As an application, we will also answer a variant of question 525 proposed in the 22nd British Combinatorics Conference 2009~\cite{Cam}.
\end{abstract}
\let\thefootnote\relax\footnote{{\it Keywords}: Bessel function; Chromatic number; Fourier transform; Local fields.}
\let\thefootnote\relax\footnote{{\it 2010 Mathematics Subject Classification}:  Primary 05C90; Secondary 47A10. }
\section{Introduction}

The celebrated Hadwiger-Nelson problem asks for the minimum number of colors required to color $\RR^n$ such that no two points at distance one from each other have the same color. Recall that the chromatic number of a graph $\G$, denoted by $\chi(\G)$, is the least cardinal $c$ such that the vertices of $\G$ can be partitioned into $c$ sets (called color classes) such that no color class contains an edge in $\G$. Hence, the Hadwiger-Nelson problem is the question of finding $\chi(\G_n)$, where $\G_n$ is the graph with vertex set $V(\G_n)=\RR^n$, where the adjacency of vertices $x,y \in V(\G_n)$ is defined by the equation $Q(x-y)=1$; here, $Q(x_1,\dots, x_n)=x_1^2+\dots+x_n^2$ is the canonical positive-definite quadratic form on $\RR^n$. Although it is not hard to show that 
\begin{equation}\label{HN}
4 \le \chi(\G_2) \le 7,
\end{equation} the question of determining the exact value of $\chi(\G_2)$ remains an open problem to date. For a discussion of historical developments on this problem and its extension to $\chi(\G_n)$ we refer the reader to~\cite{Soifer,Sz}. 

The definition of chromatic number allows the color classes to be arbitrary subsets of the vertex set. When the vertex set is equipped with an additional structure (e.g., a topology, or a $\sigma$-algebra structure), one can define a variant of the chromatic number by imposing further constraints on the color classes related to this structure. The {\it Borel chromatic number} of a topological graph $\G$, denoted by $ \bchi( \G)$, is the least cardinal $c$ such that the set $V(\G)$ can be partitioned into $c$ Borel subsets none of which contains two adjacent vertices of $\G$. Once the Borel measurability assumption is imposed on the color classes, a wide range of analytical tools will become available, in order to establish bounds on the Borel chromatic number of the graphs in question. One of the pioneering papers in this direction is the work of Falconer~\cite{Falconer} who proved that under the measurability assumption one can improve \eqref{HN} to give $\bchi(\G_2) \ge 5$. These techniques have been further developed by Bukh~\cite{Bukh}, Bachoc--Nebe--Oliveira--Vallentin~\cite{BNOV} and  Oliveira--Vallentin~\cite{Vallentin}. 

The main question addressed in this paper concerns a new generalization of the Hadwiger-Nelson problem. Let $F$ be an arbitrary field of characteristic different from $2$, and let $V$ be a finite-dimensional vector space over $F$. Given a non-degenerate quadratic form $Q$ defined on $V$, we will use the term {\it quadratic space} to refer to the pair $\Q=(V,Q)$. 
For a quadratic space $\Q=(V,Q)$, we can now define the {\it quadratic graph} $\qgraph{\Q}$ as follows. The vertex set of 
$\qgraph{\Q}$ coincides with the vector space $V$. Vectors $v,w \in V$ are adjacent in $\qgraph{\Q}$ if and only if $Q(v-w)=1$. More succinctly, $\qgraph{\Q}$ is the Cayley graph of the abelian group $V$ with respect to the unit sphere $S=\{v\in V: Q(v)=1\}$. 
Recall that a quadratic form $Q$ defined over $V$ is {\it isotropic} if there exists a non-zero vector $v \in V$ with $Q(v)=0$. Otherwise, $Q$ is called {\it anisotropic}. The main result of this paper is the following:

\begin{theorem}\label{Borel-main-theorem} Let $F$ be a local field of characteristic zero, $V$ a finite-dimensional $F$-vector space of dimension at least $2$, and $\Q=(V,Q)$ the quadratic space associated to a non-degenerate quadratic form $Q$ on $V$. Then $\bchi(\qgraph{\Q})$ is finite if and only if $Q$ is anisotropic.
\end{theorem}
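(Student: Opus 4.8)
The statement is an equivalence, so I would prove the two directions separately, expecting the implication ``anisotropic $\Rightarrow$ finite'' to be elementary and ``isotropic $\Rightarrow$ infinite'' to carry all of the analytic weight. For the easy direction I would first record the standard fact that over a local field a non-degenerate \emph{anisotropic} form satisfies $|Q(v)| \asymp |v|^2$; equivalently, the sphere $S=\{v\in V: Q(v)=1\}$ is compact and bounded away from $0$, lying in an annulus $\{c_1\le |v|\le c_2\}$ with $c_1>0$ (were $Q$ isotropic it would contain a hyperbolic plane, and hence a non-compact hyperbola, so $S$ would not be compact). Given such an annulus I would exhibit an explicit finite Borel colouring by tiling $V\cong F^n$: choose cells small enough (cubes of diameter $<c_1$ when $F=\RR$, cosets of a small lattice $\p^k\OO^n$ when $F$ is non-archimedean) so that any two points of a single cell differ by a vector of norm $<c_1$, which is not in $S$; thus each cell is independent. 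Then colour the cells by a finite periodic pattern that separates any two cells capable of hosting an $S$-edge, which is possible because $S\subseteq\{|v|\le c_2\}$ is bounded. The colour classes are finite unions of cells, hence Borel (indeed clopen in the non-archimedean case), so $\bchi(\qgraph{\Q})<\infty$.

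For the converse I would invoke the spectral lower bound for the Borel chromatic number of Cayley graphs: for every symmetric Borel probability measure $\mu$ supported on $S$, one has $\bchi(\qgraph{\Q})\ge 1+1/\bigl(-\inf_\xi \hat\mu(\xi)\bigr)$, where $\hat\mu$ denotes the Fourier transform on $V$. This is legitimate because $\Cay(V,\operatorname{supp}\mu)$ is a subgraph of $\qgraph{\Q}$, so any lower bound for it bounds $\bchi(\qgraph{\Q})$ from below. To force $\bchi(\qgraph{\Q})=\infty$ it then suffices to construct a family $(\mu_N)_N$ of such measures with $\hat\mu_N(0)=1$ and $-\inf_\xi\hat\mu_N(\xi)\to 0$, since the bound would then exceed every finite value. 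This reduces the theorem to a purely Fourier-analytic construction on $S$, and it is exactly here that isotropy enters: when $Q$ is isotropic the sphere $S$ is non-compact and is asymptotic to the scale-invariant cone $\{Q=0\}$.

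A natural choice is to let $\mu_N$ be a normalized surface measure on larger and larger pieces of $S$, symmetrized so that $\hat\mu_N$ is real, whereupon $\hat\mu_N(\xi)$ becomes the oscillatory integral $Z_N^{-1}\!\int_{S\cap B_{R_N}}\psi(\langle \xi,v\rangle)\,d\sigma(v)$ with total mass $Z_N=\sigma(S\cap B_{R_N})\to\infty$; the divergence of $Z_N$ is precisely where non-compactness of $S$, i.e.\ isotropy, is used. For fixed $\xi\neq 0$ the oscillation of $v\mapsto\psi(\langle\xi,v\rangle)$ in the non-compact directions of the quadric should keep the numerator bounded as $R_N\to\infty$, forcing $\hat\mu_N(\xi)\to 0$ pointwise; the content is to make this quantitative and \emph{uniform}. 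The main obstacle, which I expect to be the heart of the argument, is to bound these oscillatory integrals uniformly in both $R_N$ and $\xi$ and, in particular, to control the negative excursions of $\hat\mu_N$ in the delicate crossover regime $|\xi|\sim R_N^{-1}$, so as to conclude that the negative part of $\hat\mu_N$ vanishes, i.e.\ $\inf_\xi\hat\mu_N(\xi)\to 0^-$. I would obtain such estimates from the explicit evaluation of the radial oscillatory integrals over $F$ --- Bessel $J$/$K$-type functions in the archimedean case and their non-archimedean (Igusa local-zeta) analogues --- together with the homogeneity of the cone $\{Q=0\}$, which governs the decay as the support of $\mu_N$ escapes to infinity. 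Granting these bounds, the spectral inequality gives $\bchi(\qgraph{\Q})\ge 1+1/\bigl(-\inf_\xi\hat\mu_N(\xi)\bigr)\to\infty$, completing the isotropic case.
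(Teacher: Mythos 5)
Your overall architecture matches the paper's: for anisotropic $Q$ you show the sphere $S=\{Q=1\}$ is bounded and bounded away from $0$ and then color periodically by small cells (this is exactly the paper's Lemma~\ref{finite-chro} combined with the compactness argument of Section~\ref{anisotropic}), and for isotropic $Q$ you apply the Hoffman-type spectral bound to symmetric probability measures supported on growing truncations of $S$, reducing everything to showing $-\inf_\xi\widehat{\mu_N}(\xi)\to 0$. That reduction is sound (your subgraph remark is the same observation the paper uses, and your bounded truncations ensure the finiteness hypothesis of Theorem~\ref{chrom-op} via Lemma~\ref{finite-chro}). The problem is that the one estimate carrying all the weight is only asserted (``granting these bounds''), and for the measure you actually propose it is \emph{false}. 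Take the natural reading of ``normalized surface measure'' as arc-length/Riemannian surface measure, and test it on the hyperbola $xy=1$ in $\RR^2$: arc length concentrates linearly along the four asymptotic ends, so the symmetrized normalized measure on $S\cap B_{R}$ is essentially uniform measure on four axis-parallel segments of length $\sim R$. Its Fourier transform at the crossover scale $\|\xi\|\sim R^{-1}$ is essentially $\sin(2\pi u R)/(2\pi u R)$, whose minimum is $\approx -0.217$ \emph{independently of $R$}. Hence $-\inf_\xi\widehat{\mu_N}(\xi)$ does not tend to $0$, and the spectral bound stalls at a constant (about $6$) no matter how far the support escapes to infinity. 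The divergence of the total mass $Z_N$, which you identify as the place isotropy enters, is not by itself what drives the theorem.

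What makes the estimate true in the paper is a different choice of measure, together with a dimensional reduction you skip. First, by Lemma~\ref{hyper-quad} every isotropic space contains the hyperbolic plane, and the hyperbola graph $\hgraph{F}$ sits inside $\qgraph{\Q}$ as a closed induced subgraph, so one may work entirely with one-dimensional integrals over the parametrized hyperbola $s\mapsto(s,1/s)$. Second, on the truncated hyperbola the paper takes the density $\mathrm{d}s/\|s\|$ --- the multiplicative Haar (equivalently, the $Q$-invariant Gelfand--Leray) measure --- which spreads unit mass \emph{evenly over the $\sim T$ multiplicative scales} rather than concentrating it at the outer ends. With this choice $\widehat{\mu_T}(x,y)=\frac{1}{2T}\int_{-T}^{T}\cos(2\pi(xe^t+ye^{-t}))\,\mathrm{d}t$ in the real case, van der Corput bounds the total negative oscillation by an absolute constant (Lemma~\ref{ocslliatory}), and the Sally--Taibleson incomplete Bessel estimates do the same $p$-adically (Lemma~\ref{Theorem-Bessel-In}); normalizing then gives $\inf\widehat{\mu_T}\ge -C/T\to 0^-$. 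So your plan is repairable --- replace surface measure by the invariant measure, and preferably first restrict to the hyperbola so the oscillatory integrals become one-dimensional --- but as written the crossover regime you flag as ``delicate'' is precisely where your construction breaks, not merely where more work is needed.
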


A local field is, by definition, a topological field with a non-discrete locally compact topology. It is well-known that a local field of characteristic zero is isomorphic to the field of real or complex numbers, or to a finite extension of the field of $p$-adic numbers $\QQ_p$, where $p$ varies over the set of prime numbers. Theorem \ref{Borel-main-theorem} shows in particular that 
whether or not the form $Q$ is isotropic can be read off the Borel chromatic number of the associated quadratic graph. One can likewise inquire about the chromatic number of quadratic graphs over number fields. For the quadratic space $\Q=(\mathbb{Q}^2, x^2+y^2)$, Woodall~\cite{Woodall} has shown that $\chi(\qgraph{\Q})=2$. Using the local-global principle, we can deduce a purely algebraic corollary of Theorem~\ref{Borel-main-theorem} that applies to the {\it ordinary} chromatic number of quadratic forms over number fields. 
\begin{corollary}
Let $K$ be a number field, $V$ a finite-dimensional $K$-vector space of dimension at least $2$, and $\Q_K=(V,Q)$ the quadratic space associated to a non-degenerate quadratic form $Q$ on $V$. If $Q$ is anisotropic, then $\chi(\qgraph{\Q_K})$ is finite.
\end{corollary}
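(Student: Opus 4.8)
The plan is to deduce the corollary from Theorem~\ref{Borel-main-theorem} via the Hasse--Minkowski local--global principle, combined with two elementary monotonicity properties of the chromatic number.

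First I would invoke the Hasse--Minkowski theorem: a non-degenerate quadratic form over a number field $K$ is isotropic over $K$ if and only if it is isotropic over every completion $K_v$ of $K$. Passing to contrapositives, since $Q$ is assumed anisotropic over $K$, there must exist at least one place $v$ of $K$ for which $Q$ remains anisotropic over the completion $K_v$. Such a completion $K_v$ is a local field of characteristic zero (namely $\RR$, $\CC$, or a finite extension of some $\QQ_p$), and base change preserves both the dimension ($\ge 2$) and the non-degeneracy of $Q$. Writing $V_{K_v} = V \otimes_K K_v$, letting $Q_v$ denote the extension of $Q$ to $V_{K_v}$, and setting $\Q_v = (V_{K_v}, Q_v)$, I may therefore apply Theorem~\ref{Borel-main-theorem} to conclude that $\bchi(\qgraph{\Q_v})$ is finite.

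It remains to transfer this finiteness back to the ordinary chromatic number over $K$, and two observations suffice. On the one hand, every Borel partition of $V_{K_v}$ into independent sets is in particular a partition into independent sets, so $\chi(\qgraph{\Q_v}) \le \bchi(\qgraph{\Q_v})$. On the other hand, the inclusion $V \hookrightarrow V_{K_v}$ identifies $\qgraph{\Q_K}$ with the induced subgraph of $\qgraph{\Q_v}$ on the vertex subset $V$: for $u,w \in V$ the difference $u-w$ lies in $V$, and $Q_v$ restricts to $Q$ there, so the two adjacency relations agree. Since the chromatic number is monotone under passage to induced subgraphs, $\chi(\qgraph{\Q_K}) \le \chi(\qgraph{\Q_v})$. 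Chaining these inequalities yields $\chi(\qgraph{\Q_K}) \le \bchi(\qgraph{\Q_v}) < \infty$, as required.

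I do not anticipate a serious obstacle, as the argument is essentially formal once Theorem~\ref{Borel-main-theorem} is available: the only genuinely number-theoretic input is the existence of a single completion over which $Q$ stays anisotropic, which is precisely what Hasse--Minkowski guarantees, while the remaining steps are purely graph-theoretic. It is worth emphasizing that the proof actually locates a specific place $v$, so that the finite value furnished by Theorem~\ref{Borel-main-theorem} over $K_v$ provides an explicit upper bound for $\chi(\qgraph{\Q_K})$.
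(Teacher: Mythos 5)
Your proposal is correct and follows exactly the paper's own argument: invoke Hasse--Minkowski to find a place $v$ where $Q$ stays anisotropic over the local field $K_v$, apply Theorem~\ref{Borel-main-theorem} there, and conclude via $\chi(\qgraph{\Q_K}) \le \chi(\qgraph{\Q_v}) \le \bchi(\qgraph{\Q_v}) < \infty$. The only difference is that you spell out the induced-subgraph monotonicity step that the paper compresses into ``a fortiori,'' which is a harmless elaboration.
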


\begin{proof}
By Hasse-Minkowski theorem, there exists a place $\nu$ of $K$ such that the form $Q(x)$ is anisotropic over the completion $F=K_{\nu}$. 
Since $F$ is a local field, by Theorem~\ref{Borel-main-theorem}, we have $\bchi(\qgraph{\Q_F})< \infty$, where $\Q_F$ is the 
quadratic space on $K$ defined by $Q$. Hence, \fword{a fortiori}, 
$\chi(\qgraph{\Q_K}) \le \chi(\qgraph{\Q_F}) \le \bchi(\qgraph{\Q_F})< \infty$.
\end{proof}

Let us describe the key ideas of the proof of Theorem~\ref{Borel-main-theorem}. For anisotropic forms, a partitioning of the space $F^n$ into box-like subsets, and periodically coloring these boxes yields a proper coloring. The main technical part of the work goes into the proof of the theorem for isotropic forms. Here we use a spectral bound established in~\cite{SBO} which provides a lower bound on the Borel chromatic number of Cayley graph of vector space $V$ with respect to Borel measurable set $S=\{v\in V: Q(v)=1\}$, which is analogous to the well-known Hoffman bound for finite graphs. This bound, \fword{per se}, is only applicable when $S$ is bounded, which is never the case here. Instead, we will approximate $S$ by an increasing family $\{ S_T \}_{T \ge 1}$ of bounded Borel sets. The crucial step in the proof will consist of
establishing a useful lower bounds for the a family of oscillatory real and $p$-adic integrals that arise as the Fourier transform of carefully chosen probability measures on $\{ S_T \}$. The latter is dealt with using van der Corput's lemma in the real case and certain Bessel functions in the $p$-adic case. 

In certain cases, one can attempt to quantify Theorem~\ref{Borel-main-theorem} for anisotropic forms. Let $p>2$ be a prime number and set $\Q_p=(\QQ_p^2, x_1^2+x_2^2)$. Note that 
if $p \equiv 3 \pmod{4}$, then the quadratic form $x_1^2+x_2^2$ is anisotropic over $\QQ_p$ and hence $\bchi(\Q_p)< \infty$. Indeed, by keeping track of the bounds in the proof of Theorem~\ref{Borel-main-theorem} for anisotropic quadratic forms, one can see that $\bchi(\qgraph{\Q_p}) =O(p^2)$. We can now pose the following question:

\begin{question}[The $p$--adic Hadwiger-Nelson problem] Is it true that 
there exists a universal constant $B$ (independent of $p$) such that $$\bchi(\qgraph{\Q_p})\leq B,$$ holds as $p$ ranges over all primes of the form $4k+3$?
\end{question}

\begin{remark}[Some set theoretic considerations]
Let $S$ be the subset of $\RR^2$ defined by 
\[ S= \{ (q_1,q_2) \pm \epsilon: q_1,q_2 \in \QQ,\, \epsilon= ( \sqrt{2}, 0), (0, \sqrt{2}), 
( \sqrt{2}, \sqrt{2}), (- \sqrt{2}, \sqrt{2}) \}. \]
Soifer and Shelah~\cite[section 3]{Shelah} define the graph $G_2$ as the Cayley graph of $\RR^2$ with respect to $S$, and prove the intriguing theorem that the value of the chromatic number of $G_2$ depends on the choice of axioms for set theory. More precisely, they showed that under {\bf ZFC}, $ \chi(G_2)=4$ holds, while if one opts for the axiomatic system (the consistency of this system is proved by Solovay~\cite{Solovay}) {\bf ZF}+ {\bf DC}+ {\bf LM} (that is, {\bf ZF} combined with the axiom of dependent choice, and the axiom that every subset of $\RR$ is Lebesgue measurable), then $G_2$ does not even admit a proper coloring with $\aleph_0$ colors. Let $\G_{n,r}$ denote the quadratic graph associated the canonical quadratic form of signature $(r,n-r)$ over $\RR^{n}$. Theorem \ref{Borel-main-theorem} shows that if $1 \le r \le n-1$, then
\begin{equation}\label{ineq}
\chi(\G_{n,r}) \le \bchi(\G_{n,r})= \aleph_0.
\end{equation}

If the value of $ \chi( \G_{n,r})$ turns out to be finite, then one obtains another (in our opinion more straight-forward, though less dramatic) example of a topological graph with the type of behavior \`a la Soifer-Shelah. 
\end{remark}

For a field $F$ and an integer $n \ge 2$, {\it the regular graph} associated to the  ring of $n\times n$ matrices over $F$, denoted by $\Gamma_n(F)$, has the vertex set $\GL_n(F)$ (general linear group over $F$), where two vertices $x,y \in \GL_n(F)$ are adjacent if $\det(x+y)=0$. Regular graphs (for arbitrary rings) were introduced in~\cite{anderson} and the study of their chromatic number was initiated in~\cite{AJF}. For motivations and more details, we refer the reader to Section \ref{application}.
As a corollary of Theorem~\ref{Borel-main-theorem}, we answer question 525 from~\cite{Cam} by proving the following theorem:

\begin{theorem}\label{app}
Let $F$ be a local field of characteristic zero and $n \ge 2$. Then the Borel chromatic number of the regular graph $\Gamma_{n}(F)$ is infinite. 
\end{theorem}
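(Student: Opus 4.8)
The plan is to exhibit an isotropic quadratic graph as a subgraph of $\Gamma_n(F)$ through an explicit polynomial (hence Borel) map, and then to quote Theorem~\ref{Borel-main-theorem}. The conceptual difficulty, and the only real obstacle, is that adjacency in $\Gamma_n(F)$ is governed by $\det(x+y)=0$, a condition on the \emph{sum} $x+y$, whereas a quadratic graph $\qgraph{\Q}$ is a Cayley graph whose adjacency $Q(v-w)=1$ depends on the \emph{difference} $v-w$. An affine parametrization $v\mapsto g+L(v)$ always renders $\iota(v)+\iota(w)$ a function of $v+w$ alone and can never produce a difference; the heart of the proof is therefore a parametrization with quadratic entries that forces the cross terms to assemble into $v-w$.

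First I would treat $n=2$. Define the Borel (indeed polynomial) map $\iota\colon F^2\to\SL_2(F)\subseteq\GL_2(F)$ by
\[
\iota(a,b)=\begin{pmatrix} 1+ab & a \\ b & 1 \end{pmatrix},
\]
which lies in $\SL_2(F)$ since its determinant is $(1+ab)-ab=1$. Writing $v=(a_1,b_1)$ and $w=(a_2,b_2)$, a direct computation of the $2\times 2$ determinant yields the key identity
\[
\det\bigl(\iota(v)+\iota(w)\bigr)=4+(a_1-a_2)(b_1-b_2),
\]
because the products $2a_1b_1+2a_2b_2$ coming from the diagonal cancel against the off-diagonal product $(a_1+a_2)(b_1+b_2)$ precisely so as to leave $(a_1-a_2)(b_1-b_2)$. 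Consequently $\iota(v)$ and $\iota(w)$ are adjacent in $\Gamma_2(F)$ if and only if $(a_1-a_2)(b_1-b_2)=-4$, that is, if and only if $Q(v-w)=1$, where $Q(a,b)=-\tfrac14\,ab$ is a non-degenerate \emph{isotropic} binary quadratic form. Thus $\iota$ is an injective Borel map inducing an isomorphism of $\qgraph{\Q}$, with $\Q=(F^2,Q)$, onto the induced subgraph of $\Gamma_2(F)$ supported on $\iota(F^2)$.

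To pass to general $n\ge 2$ I would set $\iota_n(v)=\iota(v)\oplus I_{n-2}$ (block-diagonal), which lies in $\GL_n(F)$ since $\det\iota_n(v)=1$. Then $\det\bigl(\iota_n(v)+\iota_n(w)\bigr)=2^{n-2}\bigl(4+(a_1-a_2)(b_1-b_2)\bigr)$, so the same equivalence $\iota_n(v)\sim\iota_n(w)\iff Q(v-w)=1$ persists for every $n\ge 2$, and $\iota_n$ is again an injective Borel graph homomorphism from $\qgraph{\Q}$ into $\Gamma_n(F)$. Finally I would pull back colorings: given any proper Borel coloring $c$ of $\Gamma_n(F)$, the composition $c\circ\iota_n$ is Borel (as $\iota_n$ is continuous) and proper on $\qgraph{\Q}$, since adjacent $v,w$ map to adjacent, hence distinct, vertices in the loopless graph $\Gamma_n(F)$. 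This gives $\bchi(\qgraph{\Q})\le\bchi(\Gamma_n(F))$. Because $Q$ is non-degenerate and isotropic on a space of dimension $2$, Theorem~\ref{Borel-main-theorem} forces $\bchi(\qgraph{\Q})=\infty$, and therefore $\bchi(\Gamma_n(F))=\infty$, as claimed. I expect essentially all of the work to be concentrated in discovering and verifying the determinant identity of the preceding paragraph; once that is in hand, the block-diagonal extension and the coloring pullback are routine.
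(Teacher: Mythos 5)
Your proof is correct and follows essentially the same route as the paper: your parametrization $\iota(a,b)=\begin{pmatrix}1+ab & a\\ b & 1\end{pmatrix}$ is the paper's matrix $a_{x,y}$ under the substitution $a=-2x$, $b=2y$, and both arguments rest on the same determinant identity converting sums into differences before invoking Theorem~\ref{Borel-main-theorem} for an isotropic binary form. The only cosmetic differences are that you spell out the reduction to general $n$ via the block-diagonal embedding, where the paper simply asserts it without loss of generality, and that you pull back colorings along the continuous injection rather than restricting them to the closed image set $A$.
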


\begin{remark} It is very likely that Theorems~\ref{Borel-main-theorem} and \ref{app} equally hold over local fields of positive characteristic. The details need to be worked out. 
\end{remark}

This article is organized as follows. In section \ref{prel} we will set some notation and briefly explain the ideas behind the spectral bound used later. Sections \ref{sec-quadratic} and \ref{application} are devoted to the proofs of Theorem \ref{Borel-main-theorem} and Theorem \ref{app}.

\section{Preliminaries}\label{prel}
In this section, we will review some of the ideas in the beautiful recent work~\cite{SBO} to prove an analog of the Hoffman bound for certain Cayley graphs of the additive group of the Euclidean space $\RR^n$. 
\subsection{The chromatic number of self-adjoint operators}
Let $\G=(V,E)$ be a finite graph and consider the Hilbert space $L^2(V)$ of all complex-valued functions on $V$, equipped with the inner product 
$\langle f,g\rangle:=\sum_{v\in V}f(v)\overline{g(v)}$.
The adjacency operator $A: L^2(V)\to L^2(V)$ defined by $Af(v)=\sum_{vw\in E} f(w)$ is self-adjoint. It is easy to see that the {\it numerical range} of $A$  
$$
W(A):=\{\langle Af,f\rangle: \|f\|_2=1\},
$$  
is equal to $[\lambda_{n-1},\lambda_0]$, where $\lambda_0\geq\lambda_1\geq\cdots\geq\lambda_{n-1}$ is the spectrum of $A$. It is also easy to verify that a subset $I\subseteq V$ is independent if and only if for all $f\in L^2(V)$ supported on $I$, one has $\langle Af,f\rangle=0$.
This novel interpretation of independent sets is used in~\cite{SBO} to prove an analog of the Hoffman bound for certain Cayley graphs of the additive group of the Euclidean space $\RR^n$. As we will need to work in a slightly more general framework, it will be useful to briefly go over some of the key points of~\cite{SBO}; for details, we refer the reader to the original paper.  
     
Let $(V, \Sigma, \mu)$ be a finite measure space, consisting of a set $V$, a $\sigma$-algebra $\Sigma$ on $V$, and a measure $\mu$. Consider the Hilbert space $L^2(V)$ with the inner product:
\begin{equation*}
L^2(V)=\{f: V\to \CC\,\, \text{measurable}: \int_V |f|^2\, \d{\mu}<\infty\},\qquad \langle f,g\rangle=\int_V f(x)\overline{g(x)}\,\d{\mu(x)}.
\end{equation*}
For a bounded and self-adjoint operator $A: L^2(V)\to L^2(V)$, one can show that the {\it numerical range} of $A$ defined by 
$$
W(A)=\{\langle Af,f\rangle: \|f\|_2=1\},
$$
is an interval in $\RR$. We denote the endpoints of $W(A)$ by 
$$m(A):=\inf\{\langle Af, f\rangle: \|f\|_2 = 1 \}, \qquad M(A):=\sup\{\langle Af, f\rangle: \|f\|_2=1\}.$$

\begin{definition} Let $A: L^2(V)\to L^2(V)$ be a bounded, self-adjoint operator. A
measurable set $I\subseteq V$ is called an {\it independent set for $A$} if $\langle Af, f\rangle=0$ for each $f\in L^2(V)$ which vanishes almost everywhere outside of $I$. Moreover  the {\it chromatic number} of $A$, denoted by $\chi(A)$, equals the least number $k$ such that one can partition $V$ into $k$ independent sets of $A$.
\end{definition}
The following result~\cite[Theorem 2.3]{SBO} is analogous to the well-known Hoffman bound for finite graphs: 
\begin{theorem}\label{chrom-op}
Let $A: L^2(V)\to L^2(V)$ be a nonzero, bounded and self-adjoint operator. Moreover assume that $\chi(A)<\infty$. Then
\begin{equation}\label{chrom-A}
\chi(A)\geq 1-\frac{M(A)}{m(A)}.
\end{equation}
\end{theorem}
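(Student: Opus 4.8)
The plan is to adapt the classical proof of Hoffman's bound, replacing the adjacency matrix by $A$, the color classes by the independent sets of a partition realizing $\chi(A)$, and---crucially---the top eigenvector by a supremum over unit vectors, since in the measure-theoretic setting $M(A)$ need not be attained. Fix a partition $V=I_1\sqcup\cdots\sqcup I_k$ into independent sets with $k=\chi(A)<\infty$, and let $P_i\colon L^2(V)\to L^2(V)$ be multiplication by the indicator $\mathbf 1_{I_i}$. These are orthogonal projections onto the functions supported on $I_i$; since the $I_i$ partition $V$ we have $\sum_i P_i=\mathrm{Id}$, and by disjointness of supports $\sum_i\|P_if\|_2^2=\|f\|_2^2$ for every $f$.

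Write $m=m(A)$, $M=M(A)$ and set $B=A-m\,\mathrm{Id}$. Because every Rayleigh quotient $\langle Af,f\rangle/\|f\|_2^2$ lies in $W(A)=[m,M]$, the operator $B$ satisfies $\langle Bf,f\rangle\ge 0$ for all $f$, so $(f,g)\mapsto\langle Bf,g\rangle$ is a positive semidefinite Hermitian form and obeys the Cauchy--Schwarz inequality. The independence of $I_i$ gives $\langle AP_if,P_if\rangle=0$, whence $\langle BP_if,P_if\rangle=-m\,\|P_if\|_2^2$; applying Cauchy--Schwarz for $B$ to each term then yields $|\langle BP_if,P_jf\rangle|\le(-m)\|P_if\|_2\|P_jf\|_2$ for all $i,j$. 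Decomposing $f=\sum_iP_if$,
\begin{equation*}
\langle Bf,f\rangle=\sum_{i,j}\langle BP_if,P_jf\rangle\le\sum_{i,j}\bigl|\langle BP_if,P_jf\rangle\bigr|\le(-m)\Big(\sum_i\|P_if\|_2\Big)^2 .
\end{equation*}
A second, ordinary Cauchy--Schwarz gives $\big(\sum_i\|P_if\|_2\big)^2\le k\sum_i\|P_if\|_2^2=k\,\|f\|_2^2$, so $\langle Bf,f\rangle\le(-m)k\,\|f\|_2^2$ for every $f$. Taking the supremum over unit vectors and using $\langle Bf,f\rangle=\langle Af,f\rangle-m$ produces $M-m\le-mk$, which rearranges to $k\ge 1-M/m$, as desired.

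For this final rearrangement to be legitimate I must know that $m=m(A)<0$; this is the one place where the independence hypothesis is used beyond bookkeeping, and I expect it to be the main obstacle. First, since $A\ne 0$ the total mass $\mu(V)$ is positive and finite, so some class $I_j$ has positive measure; testing against $\mathbf 1_{I_j}/\sqrt{\mu(I_j)}$ shows $0\in W(A)$, hence $m\le 0\le M$. To rule out $m=0$, suppose it holds; then $A\ge 0$, and for any $f$ supported on some $I_i$ the independence relation $\langle Af,f\rangle=0$ together with the positive semidefinite Cauchy--Schwarz $|\langle Af,g\rangle|^2\le\langle Af,f\rangle\langle Ag,g\rangle$ forces $Af=0$. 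Applying this to $f=P_ih$ for arbitrary $h$ gives $AP_i=0$ for each $i$, and summing yields $A=A\sum_iP_i=0$, contradicting $A\ne 0$. Therefore $m<0$, the division preserves the inequality, and \eqref{chrom-A} follows. Note that the argument never requires $M$ to be an eigenvalue, which is precisely what lets it pass from finite graphs to the present operator setting.
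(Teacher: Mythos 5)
Your proof is correct, and it is worth noting that the paper itself offers no proof to compare against: Theorem~\ref{chrom-op} is quoted there as \cite[Theorem 2.3]{SBO}, with Remark~\ref{positive-negative} merely recording that the proof in that reference also gives $m(A)<0$ and $M(A)>0$. Your argument is a valid, self-contained reconstruction of the operator Hoffman bound: you pass to the positive semidefinite operator $B=A-m(A)\,\mathrm{Id}$, apply the Cauchy--Schwarz inequality for the semidefinite Hermitian form $(f,g)\mapsto\langle Bf,g\rangle$ to the projections $P_i$ onto the color classes (where independence gives $\langle BP_if,P_if\rangle=-m\|P_if\|_2^2$), and finish with the scalar estimate $\bigl(\sum_i\|P_if\|_2\bigr)^2\le k\sum_i\|P_if\|_2^2$; this yields $M-m\le -mk$ and hence the bound, and it correctly avoids any appeal to an extremal eigenvector, which is the genuine obstacle in passing from finite graphs to the operator setting, since $M(A)$ need not be attained. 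Your final paragraph, establishing $m(A)<0$ (and, by the identical argument applied to $-A$, one also gets $M(A)>0$), supplies exactly the content of Remark~\ref{positive-negative}, so nothing is circular. The only flaw is cosmetic, an ordering issue: the step $|\langle BP_if,P_jf\rangle|\le(-m)\|P_if\|_2\|P_jf\|_2$ already presupposes $-m\ge 0$, which you justify only at the end; since $\langle BP_if,P_if\rangle=-m\|P_if\|_2^2$ is automatically nonnegative by positivity of $B$ (and some $\|P_if\|_2>0$ once $f\ne 0$), this is harmless, but stating $m\le 0$ before the main computation would make the proof read linearly.
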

\begin{remark}\label{positive-negative}
Similar to Hoffman bound, when $A\neq 0$ and $\chi(A)<\infty$ the proof of the above theorem shows that $m(A)<0$ and $M(A)>0$.
\end{remark}
We now relate this theorem to the Borel chromatic number of Cayley graphs with the vertex set $F^n$, where $F$ is $\mathbb{R}$ or a $p$-adic field $\mathbb{Q}_p$. All that follows is parallel to~\cite{SBO}, where the case $F=\RR$ is dealt with; in fact the arguments in~\cite{SBO} carry over to the $p$-adic case without any effort. For the convenience of the reader we will briefly discuss the key points.
For the rest of this subsection, let $F$ be either $\RR$ or a $p$-adic field $\QQ_p$ with the ring of integers $\mathbb{Z}_p$. We will also assume that $\mathcal{S}\subseteq F^n$ is a symmetric (i.e., $-\mathcal{S}=\mathcal{S}$), Borel measurable set (with respect to the $\sigma$-algebra generated by the open sets), which does not contain the origin in its closure. The Cayley graph $\Cay(F^n,\mathcal{S})$ has the vertex set $F^n$, where two vertices $x,y\in F^n$ are adjacent if and only if $x-y\in\mathcal{S}$. 
 The goal is to find a spectral bound for the Borel chromatic number of this graph. 

Let $\mu$ be a finite Borel measure on $F^n$ with support contained in $\mathcal{S}$. Therefore $\mu$ is a Radon measure~\cite[Theorem 7.8]{Folland}. We will also assume that $\mu$ is symmetric, i.e., $\mu(-\mathscr{A}) =\mu(\mathscr{A})$ holds for all Borel measurable sets $\mathscr{A}$. This measure produces a bounded, self-adjoint operator 
$$
A_\mu: L^2(F^n)\to L^2(F^n),\qquad f\mapsto f*\mu,$$
where  
$f*\mu(x)=\int_{F^n} f(x-y)\,d\mu(y)$.
One can easily see that since the support of $\mu$ is contained in $\mathcal{S}$, any Borel measurable independent set $I$ of $\Cay(F^n,\mathcal{S})$  is also an independent set of $A_\mu$ (see~\cite[Section 3.1]{SBO}). One can show that the Borel chromatic number of $\Cay(F^n,\mathcal{S})$ is finite when $\mathcal{S}\subseteq F^n$ is a bounded set with the above properties. This will be proved later (see Lemma~\ref{finite-chro}). Therefore by Theorem~\ref{chrom-op} we have
\begin{equation}\label{Eq4}
1-\frac{M(A_\mu)}{m(A_\mu)}\leq \chi(A_\mu)\leq \bchi(\Cay(F^n,\mathcal{S})).
\end{equation}
The numerical range of $A_\mu$ can now be determined using Fourier analysis. Below, we will review some of the basic properties of the Fourier transform over local fields. For more detail we refer the reader to~\cite{Rudin,Taibleson}.
For $F=\RR$, we will use the character  
$$\psi: \RR \to \CC^{\ast}, \quad \psi(x)=\exp(2\pi i x).$$ When $F=\QQ_p$, for $x\in\QQ_p$, we write $n_x$ for the smallest non-negative integer such that $p^{n_x}x\in\ZZ_p$. Let $r_x\in\ZZ$ be such that $r_x\equiv p^{n_x}x \pmod{p^{n_x}}$. It is well-known that the following map (called the Tate character)
\begin{equation}\label{additive-char}
\psi: \QQ_p\to \CC^*,\qquad \psi(x)=\exp \left( {\frac{2\pi i r_x}{p^{n_x}} } \right),
\end{equation}
is a non-trivial character of $(\QQ_p,+)$ with the kernel $\ZZ_p$. 
We will denote by $\d{x}$ a Haar measure on $F$. When $F=\QQ_p$, we assume that it is normalized such that $\int_{\ZZ_p} \d{x}=1$. We will also use $\d{x}$ for the product (Haar) measure $\d{x_1} \cdots \d{x_n}$ on $F^n$. Moreover for a Borel set $E\subseteq F^n$ we use $|E|=\int_E \d{x}$ to denote the measure of $E$.  

The Fourier transforms of $f \in L^1(F^n)$ and $\mu$ are respectively defined by the integrals
$$
\widehat{f}(u)=\int_{F^n}\overline{\psi}(x\cdot u)f(x)\, \d{x},\qquad \widehat{\mu}(u)=\int_{F^n} \overline{\psi}( x\cdot u)\, \d{\mu(x)}.
$$
Here $x\cdot u$ is the standard bilinear form on $F^n$, and $ \overline{\psi }$ is the complex conjugate of $\psi$. 
By Plancherel's theorem the Fourier transform extends to an isometry on $L^2(F^n)$ and so for any $f\in L^2(F^n)$ we have
\begin{equation}\label{mult-op}
\langle A_\mu f,f \rangle=\langle \widehat{A_\mu f},\widehat{f} \rangle=\langle \widehat{f*\mu},\widehat{f} \rangle=\langle \widehat{\mu}\widehat{f},\widehat{f}\rangle. 
\end{equation}

\begin{lemma}\label{m=inf}
Let $\mu$ be a symmetric finite Borel measure on $F^n$ with support contained in $\mathcal{S}$. Then the numerical range of $A_\mu$ is given by
$$
m(A_\mu)=\inf_{u\in F^n}\widehat{\mu}(u),\qquad M(A_\mu)=\sup_{u\in F^n}\widehat{\mu}(u).
$$
\end{lemma}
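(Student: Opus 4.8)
The plan is to diagonalize $A_\mu$ by means of the Fourier transform, turning it into a multiplication operator by $\widehat{\mu}$, and then to identify the numerical range of a multiplication operator with the range of its (continuous, real-valued) multiplier. Before doing so, I would record three basic properties of $\widehat{\mu}$. Boundedness is immediate, since $|\widehat{\mu}(u)| \le \mu(F^n) < \infty$. Continuity follows from dominated convergence applied to the defining integral, using that $\mu$ is finite and $\psi$ is continuous. Finally, $\widehat{\mu}$ is real-valued: by the symmetry hypothesis $\mu(-\mathscr{A}) = \mu(\mathscr{A})$, the substitution $x \mapsto -x$ in the integral gives $\overline{\widehat{\mu}(u)} = \int_{F^n} \psi(x\cdot u)\,\d{\mu(x)} = \int_{F^n} \overline{\psi}(x\cdot u)\,\d{\mu(x)} = \widehat{\mu}(u)$.

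Next, starting from \eqref{mult-op}, I would write
$$
\langle A_\mu f, f\rangle = \langle \widehat{\mu}\,\widehat{f}, \widehat{f}\rangle = \int_{F^n} \widehat{\mu}(u)\, |\widehat{f}(u)|^2 \, \d{u}.
$$
Since the Fourier transform is a surjective isometry of $L^2(F^n)$ by Plancherel's theorem, as $f$ runs over the unit sphere of $L^2(F^n)$ the function $g := \widehat{f}$ runs over the entire unit sphere as well. Hence
$$
W(A_\mu) = \Bigl\{ \int_{F^n} \widehat{\mu}(u)\, |g(u)|^2\, \d{u} : \|g\|_2 = 1\Bigr\}.
$$
Because $|g(u)|^2 \, \d{u}$ is a probability measure on $F^n$, every element of $W(A_\mu)$ is a weighted average of the values of $\widehat{\mu}$, whence
$$
\inf_{u\in F^n} \widehat{\mu}(u) \;\le\; \langle A_\mu f, f\rangle \;\le\; \sup_{u\in F^n} \widehat{\mu}(u),
$$
giving at once the inequalities $m(A_\mu) \ge \inf_u \widehat{\mu}(u)$ and $M(A_\mu) \le \sup_u \widehat{\mu}(u)$.

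For the reverse inequalities I would exploit continuity of $\widehat{\mu}$ to construct test functions whose mass concentrates where $\widehat{\mu}$ nearly attains its extreme values. Fixing $u_0 \in F^n$ and $\varepsilon > 0$, continuity provides a ball $B$ centered at $u_0$ of positive finite Haar measure on which $|\widehat{\mu}(u) - \widehat{\mu}(u_0)| < \varepsilon$. Taking the normalized indicator $g = |B|^{-1/2}\,\1_{B}$ yields a unit vector with
$$
\Bigl| \int_{F^n} \widehat{\mu}(u)\,|g(u)|^2\,\d{u} - \widehat{\mu}(u_0)\Bigr| < \varepsilon.
$$
Choosing $u_0$ so that $\widehat{\mu}(u_0)$ is within $\varepsilon$ of $\sup_u \widehat{\mu}(u)$ (respectively $\inf_u \widehat{\mu}(u)$) and letting $\varepsilon \to 0$ gives $M(A_\mu) \ge \sup_u \widehat{\mu}(u)$ and $m(A_\mu) \le \inf_u \widehat{\mu}(u)$. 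Together with the previous paragraph this establishes both identities.

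The step I expect to require the most care is precisely this passage from the two-sided averaging bound to sharpness. It is here that one needs the multiplier to be genuinely \emph{continuous}, so that its actual pointwise supremum and infimum (and not merely essential ones) are approached in the limit, and it is also here that one uses that balls in $F^n$ carry positive finite Haar measure, a fact that holds uniformly for $F = \RR$ and $F = \QQ_p$; this is what lets the $p$-adic case run in parallel with the real case without modification.
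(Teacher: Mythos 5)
Your proposal is correct and follows essentially the same route as the paper's proof: diagonalizing $A_\mu$ via Plancherel into multiplication by $\widehat{\mu}$, obtaining $m(A_\mu)\ge\inf_u\widehat{\mu}(u)$ and $M(A_\mu)\le\sup_u\widehat{\mu}(u)$ by the averaging argument, and getting sharpness from normalized indicators of small balls together with the continuity of $\widehat{\mu}$. The only differences are cosmetic: you spell out the boundedness, continuity, and real-valuedness of $\widehat{\mu}$ and the surjectivity of the Fourier transform, which the paper asserts without proof, and you use an $\varepsilon$-ball where the paper takes a limit as the radius tends to zero.
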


\begin{proof}
From \eqref{mult-op} combined with the fact that the Fourier transform on $L^2(F^n)$ is an isometry, we   
deduce that the numerical range of the operator $A_\mu$ is the same as the numerical range of the multiplicative operator $g\mapsto \widehat{\mu} g$. Since $\mu$ is a symmetric finite Borel measure, then $\widehat{\mu}$ is a continuous, real-valued and bounded function. Now let $g\in L^2(F^n)$ with $\|g\|_2=1$. Evidently 
$$
\langle \widehat{\mu} g,g\rangle=\int_{F^n} \widehat{\mu}(x)|g(x)|^2\, \d{x}\geq \inf_{u\in F^n}\widehat{\mu}(u),
$$
and so $m(A_\mu)\geq \inf_{u\in F^n}\widehat{\mu}(u).$
Now let $\varepsilon>0$ and pick $x_0\in F^n$ with $\widehat{\mu}(x_0)\le\inf_{u\in F^n}\widehat{\mu}(u)+\varepsilon$. Let $B_{ \delta}$ be a ball of radius $ \delta$ centered at $x_0$. Since $\widehat{\mu}$ is continuous then we have 
$$
\lim_{ \delta \to 0}\int_{F^n}\widehat{\mu}(x)\left|
\frac{\1_{B_{ \delta}}(x)}{\sqrt{|B_{ \delta}|}}\right|^2\, \d{x}=\lim_{ \delta \to 0} \frac{1}{|B_{ \delta}|} \int_{B_{ \delta}}\widehat{\mu}(x)\, \d{x}=\widehat{\mu}(x_0),
$$  
where $\1_{B_{ \delta}}$ is the characteristic function of $B_{ \delta}$. Hence 
$$m(A_\mu)\leq \widehat{\mu}(x_0)\leq \inf_{u\in F^n}\widehat{\mu}(u)+\varepsilon,$$ and therefore we have $m(A_\mu)=\inf_{u\in F^n}\widehat{\mu}(u)$. Similarly we have $M(A_\mu)=\sup_{u\in F^n}\widehat{\mu}(u)$.
\end{proof}
We can now summarize these in the following theorem.
\begin{theorem}[Bachoc-DeCortede-Oliveira-Vallentin~\cite{SBO}]\label{spectral}
Let F be either $\RR$ or a p-adic field $\QQ_p$ and let $\mathcal{S}$ be a bounded, symmetric (i.e., $-\mathcal{S}=\mathcal{S}$) Borel measurable subset of $F^n$ which does not contain the origin in its closure. Then for any symmetric, finite Borel measure $\mu$ on $F^n$ with support contained in $\mathcal{S}$ we have
\begin{equation}
1-\frac{\sup_{u\in F^n}\widehat{\mu}(u)}{\inf_{u\in F^n}\widehat{\mu}(u)}\leq \bchi(\Cay(F^n,\mathcal{S})).
\end{equation}
\end{theorem}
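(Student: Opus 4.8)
The plan is to read the bound off directly from the machinery already assembled: this theorem is a synthesis of Theorem~\ref{chrom-op}, Lemma~\ref{m=inf}, and the finiteness encoded in \eqref{Eq4}, so the proof is essentially bookkeeping with those three ingredients. I may assume $\mu \neq 0$, since otherwise $\widehat{\mu} \equiv 0$, the operator $A_\mu$ vanishes, and the assertion is vacuous. For $\mu \neq 0$ the convolution operator $A_\mu\colon L^2(F^n)\to L^2(F^n)$, $f\mapsto f*\mu$, is a nonzero, bounded, self-adjoint operator, hence a legitimate input for Theorem~\ref{chrom-op}.

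First I would verify the finiteness hypothesis $\chi(A_\mu)<\infty$ demanded by Theorem~\ref{chrom-op}. Because $\mathcal{S}$ is bounded, symmetric, and has the origin outside its closure, Lemma~\ref{finite-chro} (proved later) gives $\bchi(\Cay(F^n,\mathcal{S}))<\infty$. Fixing a Borel proper coloring of $\Cay(F^n,\mathcal{S})$ with finitely many classes, each color class is a Borel independent set of the graph; since $\operatorname{supp}\mu\subseteq\mathcal{S}$, each such class is also an independent set for $A_\mu$, as already noted in the construction preceding \eqref{Eq4}. Hence $\chi(A_\mu)\le\bchi(\Cay(F^n,\mathcal{S}))<\infty$, which is exactly the right-hand inequality of \eqref{Eq4}.

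Next I would apply the Hoffman-type bound and identify the numerical range. With $A_\mu$ nonzero, bounded, self-adjoint, and of finite chromatic number, Theorem~\ref{chrom-op} yields $\chi(A_\mu)\ge 1-M(A_\mu)/m(A_\mu)$, while Remark~\ref{positive-negative} guarantees $m(A_\mu)<0<M(A_\mu)$, so the quotient is well defined and the lower bound genuinely exceeds $1$. Lemma~\ref{m=inf} then supplies $m(A_\mu)=\inf_{u\in F^n}\widehat{\mu}(u)$ and $M(A_\mu)=\sup_{u\in F^n}\widehat{\mu}(u)$. Chaining these facts gives $1-\frac{\sup_{u}\widehat{\mu}(u)}{\inf_{u}\widehat{\mu}(u)}=1-\frac{M(A_\mu)}{m(A_\mu)}\le\chi(A_\mu)\le\bchi(\Cay(F^n,\mathcal{S}))$, which is the claim.

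The only genuinely substantive input is the finiteness $\chi(A_\mu)<\infty$, that is, Lemma~\ref{finite-chro}, and I expect the real difficulty to reside there rather than in the synthesis above. I anticipate handling it by partitioning $F^n$ into translates of a small box (a small cube when $F=\RR$, a coset of $(p^N\ZZ_p)^n$ when $F=\QQ_p$) whose diameter is small relative to the distance from $\mathcal{S}$ to the origin, and then coloring these boxes periodically so that two boxes whose difference meets $\mathcal{S}$ never share a color; the boundedness of $\mathcal{S}$ bounds the number of colors needed, giving a finite Borel coloring. Everything else follows from the already-established lemmas.
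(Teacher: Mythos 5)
Your proposal is correct and follows essentially the same route as the paper, which presents Theorem~\ref{spectral} precisely as a summary of the chain in \eqref{Eq4} (Theorem~\ref{chrom-op} plus the observation that Borel independent sets of $\Cay(F^n,\mathcal{S})$ are independent sets for $A_\mu$, with finiteness supplied by Lemma~\ref{finite-chro}) combined with the identification of $m(A_\mu)$ and $M(A_\mu)$ in Lemma~\ref{m=inf}. Your handling of the degenerate case $\mu=0$ and your sketch of the box-coloring argument for Lemma~\ref{finite-chro} also match the paper's treatment.
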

\subsection{Quadratic and hyperbolic graphs}\label{ssn:hyp-graphs}
We first recall the general notion of a quadratic space. For more details we refer the reader to~\cite[Chapter IV]{Serre-Arith}. Let $F$ be an arbitrary field of characteristic different from $2$ and let $V$ be a finite dimensional vector space over $F$.  A function $Q: V\to F$  is  called a quadratic form  on  $V$  if
\begin{itemize}
\item[(a)] $Q(ax)=a^2 Q(x)$ for $a\in F$  and $x\in V$.
\item[(b)] The map $(x, y)\mapsto Q(x+y)-Q(x)-Q(y)$ is a  bilinear form.
\end{itemize}
Such a  pair $\Q=(V,Q)$ is called  a quadratic space. For each quadratic space $\Q=(V,Q)$ we then obtain a symmetric bilinear form on $V$ defined by
\begin{equation}\label{bili}
\langle x,y\rangle:=\frac{1}{2}\left(Q(x+y)-Q(x)-Q(y)\right)=\frac{1}{2}(Q(x)+Q(y)-Q(x-y)).
\end{equation}
We say $\Q=(V,Q)$ is non-degenerate when the bilinear form~\eqref{bili} is non-degenerate. In this paper, we will only
consider non-degenerate quadratic spaces.  
If $\Q=(V, Q)$ and $\Q'=(V', Q')$ are two quadratic spaces, a  linear map 
$f:V\to   V'$ such  that  $Q'\circ f=Q$  is  called  a  morphism  (or metric morphism)  of $\Q$ into $\Q'$; we say $\Q$ is isomorphic to $\Q'$ when $f$ is a vector space isomorphism.
\begin{definition}
For a given a quadratic space $\Q=(V,Q)$, the  quadratic graph, denoted by $\qgraph{\Q}$, is defined as follows. The vertex set of 
$\qgraph{\Q}$ coincides with the vector space $V$. Vectors $v,w \in V$ form an edge in $\qgraph{\Q}$ if and only if $Q(v-w)=1$.
\end{definition}

Let $\Q=(V,Q)$ be a non-degenerate quadratic space over $F$. It is well known that  $Q$ is isotropic if and only if it can be expressed in an appropriate basis $(e_i)_{1 \le i \le n}$ for $V$ as 
\begin{equation}\label{eq:canonical}
 Q(x_1e_1+\dots+x_ne_n)= x_1x_2+a_3 x_3^2+\dots+a_nx_n^2. 
\end{equation}
In particular, the restriction
of $Q$ to the $2$-dimensional subspace $H$ spanned by $e_1$ and $e_2$ takes the form 
\begin{equation}\label{15}
Q(x_1e_1+x_2e_2)=x_1x_2.
\end{equation}
 The (unique up to isomorphism) quadratic space $\HH=(H, Q|_{H})$ (or, $\HH(F)$ if the underlying field $F$ is to be emphasized) is often referred to as the {\it hyperbolic plane} (over $F$). The associated quadratic graph to the hyperbolic plane (over the field $F$) will be called the {\it hyperbola graph over $F$}, and denoted by 
$\hgraph{F}$. In other words,
$$
\hgraph{F}=\Cay(F^2,\mathcal{S}),\qquad \mathcal{S}=\{(x,y)\in F^2: xy=1\}.
$$
Note that the map $F^2\to V$ defined by $(x_1,x_2)\mapsto x_1e_1+x_2e_2$ embeds $\hgraph{F}$ as an induced subgraph of $\qgraph{\Q}$ when $Q$ is an isotropic quadratic written in the form~\eqref{15}. If $F$ is moreover a topological field, the image of this map is a closed subset of $V$. Therefore we have the following simple fact. 
\begin{lemma}\label{hyper-quad} Let $F$ be an arbitrary field of characteristic different from $2$ and let $\Q=(V,Q)$ be an isotropic quadratic space over $F$. Then $\chi(\qgraph{\Q})\geq \chi(\hgraph{F})$.  Moreover, if $F$ is a topological field, then
$$\bchi(\qgraph{\Q}) \ge \bchi(\hgraph{F}).$$
\end{lemma}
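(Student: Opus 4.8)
The plan is to realize $\hgraph{F}$ as an induced subgraph of $\qgraph{\Q}$ sitting inside a two-dimensional coordinate subspace, and then to transport colorings. Since $Q$ is isotropic and non-degenerate, I would first invoke the structure result \eqref{eq:canonical} to fix a basis $(e_i)_{1\le i\le n}$ in which $Q(x_1e_1+\dots+x_ne_n)=x_1x_2+a_3x_3^2+\dots+a_nx_n^2$, so that the restriction of $Q$ to $H=\mathrm{span}_F(e_1,e_2)$ is exactly the hyperbolic form \eqref{15}. Let $\iota\colon F^2\to V$ be the linear map $(x_1,x_2)\mapsto x_1e_1+x_2e_2$; it is injective with image $H$.

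The key elementary observation is that $\iota$ is a graph embedding. Indeed, for $u=(x_1,x_2)$ and $w=(y_1,y_2)$ in $F^2$ one has $\iota(u)-\iota(w)=\iota(u-w)$, and hence $Q(\iota(u)-\iota(w))=(x_1-y_1)(x_2-y_2)$. Thus $\iota(u)$ and $\iota(w)$ are adjacent in $\qgraph{\Q}$ if and only if $(x_1-y_1)(x_2-y_2)=1$, i.e.\ if and only if $u$ and $w$ are adjacent in $\hgraph{F}$. Consequently $\iota$ identifies $\hgraph{F}$ with the subgraph of $\qgraph{\Q}$ induced on the vertex subset $H$.

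For the ordinary chromatic number I would simply restrict a coloring: if $c\colon V\to\{1,\dots,k\}$ is a proper coloring of $\qgraph{\Q}$ with $k=\chi(\qgraph{\Q})$ colors, then $c\circ\iota$ is a proper coloring of $\hgraph{F}$, because adjacent vertices of $\hgraph{F}$ are sent by $\iota$ to adjacent vertices of $\qgraph{\Q}$ and therefore receive distinct colors. This yields $\chi(\hgraph{F})\le\chi(\qgraph{\Q})$.

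The Borel statement requires only the extra remark that, when $F$ is a topological field, $\iota$ is continuous (a linear map between finite-dimensional $F$-vector spaces), hence Borel measurable. Given a partition of $V$ into Borel independent sets $C_1,\dots,C_k$ of $\qgraph{\Q}$, the preimages $\iota^{-1}(C_1),\dots,\iota^{-1}(C_k)$ are Borel subsets of $F^2$ that cover $F^2$, and each is independent in $\hgraph{F}$ by the adjacency correspondence above; hence $\bchi(\hgraph{F})\le\bchi(\qgraph{\Q})$. There is no serious obstacle here: the entire content is the adjacency computation, and the only point demanding care is the transfer of the Borel structure, which is guaranteed by the continuity of the linear embedding $\iota$ (equivalently, by the fact noted in the text that $H=\iota(F^2)$ is a closed subset of $V$, so that Borel subsets of $V$ meet $H$ in Borel subsets of $H\cong F^2$).
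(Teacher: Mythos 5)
Your proof is correct and follows essentially the same route as the paper: writing the isotropic form in the canonical shape \eqref{eq:canonical}, embedding $\hgraph{F}$ into $\qgraph{\Q}$ via $(x_1,x_2)\mapsto x_1e_1+x_2e_2$, and pulling back (Borel) colorings along this continuous linear embedding. The only cosmetic difference is that the paper justifies Borel measurability by noting the image $H$ is closed in $V$, whereas you argue directly from continuity of $\iota$; both are valid and equivalent in effect.
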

\section{Proof of the main theorem}\label{sec-quadratic} In this section we will prove Theorem~\ref{Borel-main-theorem}. 
In the first two subsections, we will give the proof for the real and $p$-adic isotropic forms. In the last subsection, we will deal with the (easier) case of anisotropic forms. Before embarking on the proof in the isotropic case, we would like to show that Theorem~\ref{Borel-main-theorem} cannot be proven by simply pointing to an infinite clique. Recall that the clique number of a graph $\G$, denoted by $\omega(\G)$ is the largest cardinal $c$ such that $\G$ contains a complete subgraph on $c$ vertices. We now have

\begin{proposition}\label{quadratic-clique}
Let $\Q=(V,Q)$ be a quadratic space over an arbitrary field $F$ of characteristic different from $2$. Then the clique number of $\qgraph{\Q}$ is bounded above independently of the ground field. More precisely, 
$$\omega(\qgraph{Q}) \le \dim V +1.$$
\end{proposition}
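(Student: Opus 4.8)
The plan is to bound the size of a clique by translating the combinatorial condition into a statement about a configuration of vectors, and then to exploit the non-degeneracy of the bilinear form $\langle \cdot, \cdot \rangle$ associated to $Q$. Suppose $v_0, v_1, \dots, v_m$ are the vertices of a clique in $\qgraph{\Q}$. By definition this means $Q(v_i - v_j) = 1$ for all $i \neq j$. Since the quadratic graph is the Cayley graph of the additive group $V$, the clique property is translation-invariant, so without loss of generality I would translate by $-v_0$ and assume $v_0 = 0$. Setting $w_i = v_i$ for $i \ge 1$, the conditions become $Q(w_i) = 1$ for each $i$ (the edge from $0$ to $v_i$) and $Q(w_i - w_j) = 1$ for $i \neq j$ (the edge from $v_i$ to $v_j$).

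First I would convert these quadratic conditions into linear-algebraic ones using the polarization identity~\eqref{bili}. From $Q(w_i - w_j) = Q(w_i) + Q(w_j) - 2\langle w_i, w_j \rangle$ and the facts $Q(w_i) = Q(w_j) = 1$ and $Q(w_i - w_j) = 1$, I obtain $2\langle w_i, w_j \rangle = 1$, i.e. $\langle w_i, w_j \rangle = 1/2$ for all $i \neq j$, while $\langle w_i, w_i \rangle = Q(w_i) = 1$. Thus the Gram matrix $G = (\langle w_i, w_j \rangle)_{1 \le i,j \le m}$ equals $\tfrac{1}{2}(I + J)$, where $I$ is the identity and $J$ is the all-ones matrix. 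This matrix is nonsingular over any field of characteristic different from $2$: its eigenvalues (in the real picture, or by a direct determinant computation valid over any field) are $\tfrac{1}{2}$ with multiplicity $m-1$ and $\tfrac{1}{2}(m+1)$ once, so $\det G = \tfrac{1}{2^m}(m+1) \neq 0$ provided $m + 1 \neq 0$ in $F$.

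The key step is then to conclude that the vectors $w_1, \dots, w_m$ are linearly independent in $V$. Indeed, if some nontrivial linear combination $\sum c_i w_i = 0$ held, then pairing with each $w_j$ would give $\sum_i c_i \langle w_i, w_j \rangle = 0$ for every $j$, i.e. $G c = 0$ with $c = (c_1, \dots, c_m)^\top \neq 0$, contradicting the nonsingularity of $G$. Hence $m \le \dim V$, and since the clique has $m + 1$ vertices $v_0, \dots, v_m$, we get $\omega(\qgraph{\Q}) \le \dim V + 1$, as claimed.

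The main obstacle to watch for is the characteristic issue hidden in the determinant computation: the argument that $G$ is nonsingular fails exactly when $m + 1 \equiv 0$ in $F$, which can happen in positive characteristic for the specific value $m = \operatorname{char}(F) - 1$. In that borderline case the Gram matrix is singular and the linear-independence argument breaks down, so one cannot directly rule out a clique of size $m + 1$ exceeding the naive bound. However, since the desired bound is precisely $m \le \dim V$ and we only need an inequality, I would handle this by noting that even in the degenerate characteristic case the existence of $m$ vectors with this Gram matrix still forces $m \le \dim V + 1$ through a slightly more careful rank argument, or simply observe that the stated bound $\dim V + 1$ is weak enough to absorb the off-by-one. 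Verifying that the bound survives this edge case, and stating it cleanly over an arbitrary field of characteristic $\neq 2$, is the one point requiring genuine care rather than routine computation.
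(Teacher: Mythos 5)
Your main argument is the paper's own proof in slightly different clothing. The paper also translates so that one clique vertex sits at the origin, uses the polarization identity to get $\langle v_i,v_j\rangle=\tfrac12$ for $i\neq j$ and $\langle v_i,v_i\rangle=1$, and then proves linear independence --- not of the vectors themselves, but of the functionals $\alpha_i=\langle\,\cdot\,,v_i\rangle\in V^*$, concluding $m\le\dim V^*=\dim V$. Evaluating a putative dependence $\sum_k c_k\alpha_k=0$ at the points $v_i$ produces exactly your system $(I+J)c=0$, so the two computations are identical; yours runs in $V$, the paper's in $V^*$.

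The divergence is in your final paragraph, where you have correctly identified a real problem but offered a repair that does not work. You are right that $\det G=(m+1)/2^m$ vanishes precisely when $\operatorname{char} F$ divides $m+1$; at this point the paper simply asserts that the system ``has the unique solution $c_1=\cdots=c_n=0$'', which is false in that case (taking every $c_i=1$ gives a nontrivial solution). But your patch fails: the ``more careful rank argument'' (the Gram matrix then has rank $m-1$, and the rank of a Gram matrix is at most the dimension of the span of the vectors) yields only $m\le\dim V+1$, hence a clique bound of $\dim V+2$; the stated bound cannot ``absorb the off-by-one''. Indeed no repair is possible, because the proposition as stated is false in positive characteristic. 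Take $F=\FF_3$, $V=F$, $Q(x)=x^2$: this is a non-degenerate quadratic space over a field of characteristic $3\neq 2$, and any two distinct $v,w\in\FF_3$ satisfy $(v-w)^2=1$, so $\qgraph{\Q}$ is the complete graph on three vertices and $\omega(\qgraph{\Q})=3>2=\dim V+1$. This is exactly your degenerate case: clique size $3$, $m=2$, $\operatorname{char} F\mid m+1$. So the edge case you flagged is not a technicality to be absorbed --- it is a counterexample to the statement, to your argument, and to the paper's own proof alike. Both proofs are valid exactly when $\operatorname{char} F$ does not divide the clique size in question; in particular both establish the proposition over fields of characteristic zero, which is the only setting in which the paper uses it. The clean fix is to add the hypothesis $\operatorname{char} F=0$ (or $\operatorname{char} F>\dim V+2$), or to weaken the conclusion in positive characteristic to $\omega(\qgraph{\Q})\le\dim V+2$.
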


\begin{proof}
Suppose $v'_0, v'_1, \dots, v'_n \in V$ form a clique in $\qgraph{Q}$. For $1\leq i\leq n$, set $v_i=v'_i-v'_0$. Hence $Q(v_i)=1$, $1 \le i \le n$ and $Q(v_i-v_j)=1$ for $1 \le i\neq j \le n$. From~\eqref{bili} we deduce that $\langle v_i,v_j\rangle=\frac{1}{2}$ for $i \neq j$. For $1\leq i\leq n$, consider the following functionals 
$$
\alpha_i: V\to F, \qquad w\mapsto \langle w,v_i\rangle.
$$
 We claim that $ \alpha_1, \dots, \alpha_n$ are linearly independent. In fact, evaluating $ \sum_{k=1}^{n} c_k \alpha_k=0$ at $w=v_i$ yields 
\[\frac{1}{2}(c_1+ \cdots+ c_n)+\frac{1}{2}c_i=0, \quad 1 \le i \le n. \] 
This system of linear equations has the unique solution $c_1= \cdots = c_n=0$. From here, we conclude that $n \le \dim V^*=\dim V$, whence 
$\omega(\qgraph{Q}) \le \dim V +1$. 
\end{proof}

\begin{remark}
The $n+1$ vertices of a regular simplex of side $1$ in $\RR^n$ form a clique in the quadratic graph of the  form $x_1^2+ \cdots + x_n^2$; this shows that the above bound is indeed sharp. 
\end{remark}

Let us recall a number of basic facts about local fields. A local field is, by definition, a non-discrete locally compact topological field. It is a well-known fact that an archimedean local field is isomorphic to the field of real or complex numbers. A non-archimedean local field is a complete field with respect to a discrete valuation that has a finite residue field, and is isomorphic either to a finite extension of $\QQ_p$ ($p$ is a prime number) or to the field of formal Laurent series $\FF_q((t))$ over a finite field with $q=p^f$ elements. A non-archimedean local field $F$ comes with its ring of integers $\mathcal{O}$, which is a local ring whose unique prime ideal $\mathfrak{p}$ is principal. Let $\varpi$ be a generator for $\mathfrak{p}$, often referred to as a uniformizer.  
For $m\in\ZZ$, we write $\mathfrak{p}^m$ for the ideal generated by $\varpi^m$. We have thus the following filtration of $F$:
$$
F\supseteq\cdots\supseteq\mathfrak{p}^{-2}\supseteq \mathfrak{p}^{-1}\supseteq \mathfrak{p}^0=\mathcal{O}\supseteq\mathfrak{p}\supseteq\mathfrak{p}^2\supseteq\cdots \supseteq \{0\}.
$$

Let $\| \cdot \|$ denote the standard norm on $F$. When $F$ is the field of real or complex numbers, this 
denotes the standard Euclidean norm. In the non-archimedean case, this norm is defined as follows: 
Every element of $x \in F$ has a unique representation as $x=u\varpi^m$, where $u$ is a unit of $\mathcal{O}$ and $m \in \ZZ$. Let $q$ denote the cardinality of the residue field $\mathcal{O}/\mathfrak{p}$. The norm function $\| \cdot \|$ is defined by 
$\| x \|=q^{-m}$. 

\begin{lemma}\label{finite-chro} Let $F$ be a local field and let $\mathcal{S}\subseteq F^n$ be a bounded, symmetric and Borel measurable set which does not contain the origin in its closure. Then $\bchi(\Cay(F^n, \mathcal{S}))< \infty$.
\end{lemma}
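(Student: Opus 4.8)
The plan is to show that a bounded $\mathcal{S}$ that stays away from the origin forces a uniform lower bound on how far apart two adjacent vertices can be, and then to produce a proper Borel coloring by partitioning $F^n$ into sufficiently small cells and using a periodic coloring of a coarse lattice-like quotient. First I would use the hypotheses to extract two constants: since $\mathcal{S}$ is bounded, there is $R>0$ with $\|s\| \le R$ for all $s \in \mathcal{S}$, and since $\mathcal{S}$ does not contain the origin in its closure, there is $r>0$ with $\|s\| \ge r$ for all $s \in \mathcal{S}$. Thus whenever $x,y$ are adjacent we have $r \le \|x-y\| \le R$. The coloring will be built so that any two points whose difference has norm in $[r,R]$ receive different colors, which certainly suffices to make every color class independent.

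The construction differs slightly in the archimedean and non-archimedean cases, but the idea is the same: tile $F^n$ by translates of a fundamental domain of diameter less than $r$, so that two points in the same cell are never adjacent, and then color the cells themselves with finitely many colors so that two cells at distance (roughly) in $[r,R]$ never share a color. In the non-archimedean case this is cleanest: I would fix an integer $N$ large enough that $q^{-N} < r$ and an integer $M$ large enough that $q^{M} > R$, and partition $F^n$ using the additive cosets of $(\mathfrak{p}^N)^n$ inside $(\mathfrak{p}^{-M})^n$, which is a finite quotient group since $\mathfrak{p}^{-M}/\mathfrak{p}^{N} \cong \OO/\mathfrak{p}^{N+M}$ has $q^{N+M}$ elements per coordinate. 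Concretely, I would define a coloring by the class of $x$ modulo $(\mathfrak{p}^N)^n$ together with enough coarse information to separate cells within distance $R$; using the ultrametric inequality, two points in the same color class either lie in the same $\mathfrak{p}^N$-cell (so $\|x-y\| \le q^{-N} < r$) or differ by an element of norm $> R$, and in neither case can they be adjacent. Since the relevant quotient is finite, this uses finitely many colors. In the archimedean case I would instead tile $\RR^n$ (and $\CC^n \cong \RR^{2n}$) by a cubical grid of side $\ell < r/\sqrt{n}$, assign to each cube a color depending on the residues modulo $\lceil R/\ell \rceil + 2$ of its integer coordinates in each of the $n$ directions, and check by the triangle inequality that two points receiving the same color are either within a single cube (distance $< r$) or separated by more than $R$ in some coordinate.

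The main obstacle, and the only place requiring genuine care, is verifying that the chosen coloring classes are Borel measurable and that the separation argument is airtight at the boundaries of the cells. Measurability is not serious: each cell is a translate of a ball or cube, hence Borel, and each color class is a countable union of such cells. The delicate bookkeeping is choosing the period of the periodic coloring large enough that adjacency (difference norm in $[r,R]$) is genuinely excluded in \emph{both} regimes simultaneously, which is exactly why one needs the lower bound $r>0$ (to rule out near-diagonal adjacencies within a cell) and the upper bound $R<\infty$ (so that finitely many residue classes suffice to rule out medium-range adjacencies). I expect the non-archimedean estimate to fall out immediately from the ultrametric inequality, while the archimedean one needs the factor $\sqrt{n}$ to control the cube diameter; in both cases the total number of colors is finite, giving $\bchi(\Cay(F^n,\mathcal{S})) < \infty$ as claimed.
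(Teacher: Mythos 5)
Your strategy is the same as the paper's: trap $\mathcal{S}$ in an annulus $r\le\|s\|\le R$, tile $F^n$ by cells of diameter less than $r$, and color the cells periodically so that two same-colored points are either at distance less than $r$ or at distance greater than $R$. Your archimedean construction is complete and is, up to the paper's preliminary dilation normalizing $r=1$, exactly the paper's coloring $\mathbf{x}\mapsto(\lfloor x_1\rfloor \bmod m,\dots,\lfloor x_n\rfloor \bmod m)$; your explicit reduction of $\CC^n$ to $\RR^{2n}$ even covers a case the paper passes over in silence.

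The non-archimedean half, however, has a real gap: as written, your coloring is never defined on all of $F^n$. The cosets of $(\mathfrak{p}^N)^n$ inside $(\mathfrak{p}^{-M})^n$ partition only $(\mathfrak{p}^{-M})^n$, while for a general $x\in F$ the ``class of $x$ modulo $\mathfrak{p}^N$'' ranges over the \emph{infinite} group $F/\mathfrak{p}^N$; and ``together with enough coarse information'' points in the wrong direction, because to get finitely many colors you must \emph{discard} the coarse data (which $\mathfrak{p}^{-M}$-coset $x$ lies in), not record it. What is missing is an explicit map $F\to \mathfrak{p}^{-M}/\mathfrak{p}^N$. One clean fix: the quotient $F/\mathfrak{p}^{-M}$ is countable and discrete, so choose a transversal $\{x_\alpha\}$ and color $x$ by the class of $x-x_{\alpha(x)}$ in $\mathfrak{p}^{-M}/\mathfrak{p}^N$, where $x_{\alpha(x)}$ is the chosen representative of $x+\mathfrak{p}^{-M}$. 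Then your dichotomy holds verbatim: if $x,y$ share a color and lie in the same $\mathfrak{p}^{-M}$-coset, then $x-y\in\mathfrak{p}^N$, so $\|x-y\|\le q^{-N}<r$; if they lie in different cosets, then $x-y\notin\mathfrak{p}^{-M}$, so $\|x-y\|>q^{M}>R$. The color classes are countable unions of balls, hence Borel, and there are $q^{(M+N)n}$ colors in total. This is precisely the point where the paper does its real work, via the truncation $\{x\}_m=\sum_{-m\le j\le 0}b_j\varpi^j$ of the $\varpi$-adic expansion, which amounts to one particular choice of transversal. Note, though, that those digit classes are \emph{not} cosets of a subgroup (digit-wise subtraction involves carrying, so the difference of two same-colored points can in fact have nonzero digits in the range $[-m,0]$), and the norm dichotomy there needs a short valuation argument; your coset formulation, once actually completed, is the cleaner route and vindicates your expectation that the estimate ``falls out immediately from the ultrametric inequality.''
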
 
\begin{proof} First, we equip $F^n$ with the maximum norm define by
\[ \|{\bf x}\|:= \max( \|x_1\|, \dots , \|x_n\|), \qquad {\bf x}=(x_1, \dots, x_n). \]
For $0<c_1<c_2$, define the annulus with radii $r_1$ and $r_2$ by $S_{c_1,c_2}= \{ {\bf x} \in F^n: c_1 < \|{\bf x} \| < c_2 \}$. 
Since $\mathcal{S}$ is bounded and $0\notin \overline{\mathcal{S}}$, it follows that $ \mathcal{S} \subseteq S_{c_1, c_2}$ for some values of $c_1, c_2 >0$. As $\Cay(F^n,\mathcal{S})$ is isomorphic to $\Cay(F^n, (1/c_1) \mathcal{S})$, up to passing to a dilation of $\mathcal{S}$, we may assume that $c_1=1$. 
Let us first consider the case $F=\RR$. Fix an integer $m>c_2+2$. We claim that the map
\[ c({\bf x})=( \left\lfloor x_1 \right\rfloor \md {m}, \dots, \left\lfloor x_n \right\rfloor \md m),\qquad {\bf x}=(x_1, \dots, x_n).\]
provides a proper coloring. Assume $c({\bf x})=c({\bf y})$;
there are two possibilities: either $\left\lfloor x_i \right\rfloor = 
\left\lfloor y_i \right\rfloor$ for all $1 \le i \le n$, which implies $\|{\bf x}-{\bf y}\|<1$, showing that ${\bf x}$ and ${\bf y}$ are not adjacent. Or, for some $1 \le i \le n$, we have $| \left\lfloor x_i \right\rfloor - \left\lfloor y_i \right\rfloor| \ge m$, which, in turn, implies that $\|{\bf x}- {\bf y}\| \ge |x_i-y_i|>m-2>c_2$, proving that ${\bf x}$ and ${\bf y}$ are not adjacent. This gives the upper bound $\bchi(\Cay(\RR^n,\mathcal{S}))\leq m^n$.

Now let $F$ be a non-archimedean local field with the residue field $\mathcal{O}/\mathfrak{p}$ of size $q$. Let $\varpi$ be a uniformizer, so  $\| \varpi \|=1/q$.  Set $A=\mathcal{O}\times\dots\times\mathcal{O}\subseteq F^n$. Since $c_1=1$, we have $\mathcal{S} \cap A =\emptyset$.  Choose an integer $m$ such that $q^m\geq c_2$. Each $x \in F$ has a unique $\varpi$-adic representation~\cite[4.4 Proposition]{Neukirch} of the form
$$x= \sum_{j=r }^{ \infty} b_j \varpi^j, \qquad r\in \ZZ,\; b_j\in\FF_q.$$
Define
\[ \{x\}_m= \sum_{-m\leq j\leq 0}b_j\varpi^j, \]
and consider the map 
\[ c({\bf x})= ( \{x_1\}_m , \dots,  \{x_n\}_m),\qquad {\bf x}=(x_1, \dots, x_n). \]
Suppose $c({\bf x})=c({\bf y})$. This implies that in the expansion of each entry of ${\bf x}-{\bf y}$ there are no terms $\varpi^j$ with $-m \le j \le 0$. If for some $ 1 \le i \le n$, $x_i-y_i$ contains $\varpi^r$ for some $r<-m$, then $\|{\bf x}-{\bf y} \| \ge q^m>c_2$. Otherwise, $x_i-y_i$, for all $1\leq i\leq n$, only contain $\varpi^{j}$ for $j>0$, hence $\|{\bf x}- {\bf y}\|<1$. In either case, we have ${\bf x}-{\bf y} \not\in \mathcal{S}$. Therefore $\bchi(\Cay(F^n,\mathcal{S}))\leq q^{(m+1)n}$.
\end{proof}
\subsection{The case of isotropic real quadratic forms} 
The aim of this subsection is to show that if $\Q=(V,Q)$ is an isotropic quadratic space over $\RR$, then the Borel chromatic number of $\qgraph{\Q}$ is infinite. From Lemma~\ref{hyper-quad}, it suffices to show that the Borel chromatic number of $\hgraph{\RR}$ is infinite. Fix a parameter $T > 0$ and consider the measurable set 
\begin{equation}
\mathcal{S}_T=\left\{(x,y)\in \RR^2: xy=1,\, e^{-T}\leq |x|\leq e^T\right\}.
\end{equation}
We will refer to the Cayley graph $\Cay(\RR^2,\mathcal{S}_T)$ as the {\it T-chopped hyperbola graph}. 
\begin{theorem}\label{spec-real} For $T >0$, let $\Cay(\RR^2,\mathcal{S}_T)$ be the T-chopped hyperbola graph. Then 
\begin{equation}
\bchi(\Cay(\RR^2,\mathcal{S}_T))\geq \frac{T}{8\sqrt{2/\pi}}+1.
\end{equation}
\end{theorem}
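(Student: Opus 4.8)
The plan is to apply the spectral bound of Theorem~\ref{spectral} to the Cayley graph $\Cay(\RR^2,\mathcal{S}_T)$; this is legitimate because $\mathcal{S}_T$ is bounded, symmetric, Borel measurable, and bounded away from the origin. By Lemma~\ref{finite-chro} its Borel chromatic number is finite, so by Remark~\ref{positive-negative} the relevant infimum is strictly negative and the supremum strictly positive. It remains to exhibit a favourable measure. First I would parametrize the two branches of the chopped hyperbola by $t\mapsto(e^{t},e^{-t})$ and $t\mapsto(-e^{t},-e^{-t})$ for $t\in[-T,T]$, and let $\mu$ be the (symmetric, finite) pushforward of Lebesgue measure $\mathrm{d}t$ under these two maps, which is supported on $\mathcal{S}_T$. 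A direct computation then gives the real-valued Fourier transform
$$\widehat{\mu}(u)=2\int_{-T}^{T}\cos\bigl(2\pi(u_1 e^{t}+u_2 e^{-t})\bigr)\,\mathrm{d}t,\qquad u=(u_1,u_2).$$
Since $|\cos|\le 1$, we have $\widehat{\mu}(u)\le 4T$ with equality at $u=0$, whence $\sup_{u}\widehat{\mu}(u)=4T$.

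The heart of the argument is a lower bound $\inf_{u}\widehat{\mu}(u)\ge -32\sqrt{2/\pi}$ that is \emph{uniform in $T$}; granting it, Theorem~\ref{spectral} yields $\bchi(\Cay(\RR^2,\mathcal{S}_T))\ge 1-\frac{4T}{-32\sqrt{2/\pi}}=1+\frac{T}{8\sqrt{2/\pi}}$, which is exactly the claim. Equivalently, I must show $\int_{-T}^{T}\cos(\phi(t))\,\mathrm{d}t\ge -16\sqrt{2/\pi}$ for the phase $\phi(t)=2\pi(u_1 e^{t}+u_2 e^{-t})$. Here I would exploit the special structure $\phi''=\phi$: the phase solves $y''=y$, so when $u_1,u_2$ have the same sign it is strictly convex with a single stationary point $t_0=\tfrac12\ln(u_2/u_1)$ at which $\phi''(t_0)=4\pi\sqrt{u_1u_2}$, whereas if $u_1,u_2$ have opposite signs then $\phi'$ never vanishes and $\phi$ is monotone. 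After reducing by the symmetries $(u_1,u_2)\mapsto(-u_1,-u_2)$ and $(u_1,u_2)\mapsto(u_2,u_1)$, which both leave the integral invariant, to the case $u_1\ge|u_2|\ge 0$, I would estimate $\int e^{i\phi}$ by van der Corput's lemma: on the part of $[-T,T]$ where $|\phi''|$ is bounded below one gets the sharp second-derivative estimate of order $\phi''(t_0)^{-1/2}$, while on the complementary part $\phi'$ is monotone and large, so the first-derivative estimate applies.

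The main obstacle is making this bound uniform in both $u$ and $T$, that is, controlling the degenerate regime in which $\phi''(t_0)=4\pi\sqrt{u_1u_2}$ is small. The naive van der Corput bound of order $1/\sqrt{\min|\phi''|}$ blows up there, so the estimate must be combined with the observation that when $\sqrt{u_1u_2}$ is small the phase $\phi$ remains close to its minimum value $4\pi\sqrt{u_1u_2}$ over a long central interval, on which $\cos\phi$ is essentially $+1$ and hence contributes positively; only the outer, genuinely oscillatory part of the integral can be negative, and there $|\phi'|$ is large enough for the monotone-phase estimate to produce a $T$-independent bound. Splitting $[-T,T]$ accordingly, summing the contributions, and pinning down the optimal constant (which is where the Fresnel value $\sqrt{2/\pi}$ enters) is the technical crux; the remaining steps are routine bookkeeping.
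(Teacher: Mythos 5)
Your setup coincides with the paper's: your $\mu$ is just $4T$ times the paper's probability measure $\mu_T$, the bound of Theorem~\ref{spectral} is invariant under rescaling $\mu$, $\sup_u\widehat{\mu}(u)=4T$ is correct, and the theorem does reduce exactly to the $T$- and $u$-uniform inequality $\int_{-T}^{T}\cos(\phi(t))\,\mathrm{d}t\ge-16\sqrt{2/\pi}$ for $\phi(t)=2\pi(u_1e^{t}+u_2e^{-t})$. The problem is that this inequality --- the entire content of the theorem --- is not actually established in your proposal: you defer it as the ``technical crux,'' and the mechanism you sketch for the degenerate regime fails as stated. You propose to treat the outer, oscillatory region by the first-derivative (monotone-phase) van der Corput estimate, asserting that $|\phi'|$ is large there. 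But in the same-sign case one has the identity $\phi'(t)^2=\phi(t)^2-\phi_{\min}^2$ with $\phi_{\min}=4\pi\sqrt{u_1u_2}$, so just outside any central interval defined by $\phi$ being near its minimum, $|\phi'|=\sqrt{\phi^2-\phi_{\min}^2}$ can be arbitrarily small (take $\phi_{\min}$ slightly below your threshold). Hence there is no uniform lower bound on $|\phi'|$ on the outer region, and the first-derivative estimate cannot close the argument without further, unspecified splitting; also your claim that the sharp constant requires a Fresnel-type optimization is a misdiagnosis of where $\sqrt{2/\pi}$ comes from.

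The paper's resolution is a one-line refinement of precisely the central-interval observation you make, and it eliminates all stationary-point case analysis: set $I=\{t\in[-T,T]:|\phi(t)|\le\pi/2\}$, which is a single closed interval (the function $u_1e^{t}+u_2e^{-t}$ is either monotone or convex/concave of constant sign, so the sublevel set of its absolute value is an interval), whence $I'=[-T,T]\setminus I$ has at most two components. On $I$ one has $\cos\phi\ge0$. On $I'$ one uses $\phi''=\phi$, so $|\phi''|=|\phi|\ge\pi/2$ there, and the second-derivative van der Corput estimate (Lemma~\ref{vdC}) --- which requires no monotonicity of $\phi'$ and is uniform in the length of the interval --- gives $\bigl|\int_{I'_j}e^{i\phi(t)}\,\mathrm{d}t\bigr|\le 8\sqrt{2/\pi}$ on each component $I'_j$; taking real parts yields $\int_{-T}^{T}\cos\phi\ge-16\sqrt{2/\pi}$ immediately. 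In particular the constant is nothing but $8\lambda^{-1/2}$ with $\lambda=\pi/2$. Your plan is repairable by replacing the first-derivative estimate on the outer region with this second-derivative estimate applied to the sublevel-set decomposition, but as written the crucial uniform bound is both unproved and routed through a step that is false in the boundary regime.
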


Notice that $\mathcal{S}_T$ is a bounded symmetric set, i.e., $\mathcal{S}_T=-\mathcal{S}_T$  which does not contain the origin in its topological closure. Hence the Borel chromatic number of $\Cay(\RR^2,\mathcal{S}_T)$ is finite by Lemma~\ref{finite-chro}. 
For a measurable set $E\subseteq \RR^2$, we define the following measure 
$$
\mu_T(E)=\frac{1}{4T}\left(\int_{e^{-T}}^{e^T} \frac{\1_E(s,1/s)}{s}\, \d{s}+\int_{e^{-T}}^{e^{T}} \frac{\1_{(-E)}(s,1/s)}{s}\, \d{s}\right),
$$  
where $\1_E$ is the characteristic function of $E$. It is easy to verify that $\mu_T$ is a symmetric probability measure  supported on $\mathcal{S}_T$.  
\begin{lemma}\label{ocslliatory} For $T>0$ and arbitrary $(x,y)\in \RR^2$ we have
\[  - \frac{8 \sqrt{2/\pi}}{T} \le \widehat{\mu_T}(x,y)\]
\end{lemma}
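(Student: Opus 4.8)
The plan is to first collapse $\widehat{\mu_T}$ into a single oscillatory integral and then extract from it a one-sided (lower) bound. Parametrizing the support of $\mu_T$ by $s\mapsto(s,1/s)$ and folding the two terms in the definition of $\mu_T$ together, the contributions of the points $(s,1/s)$ and $-(s,1/s)$ combine into a cosine; after the substitution $s=e^t$ (so that $\d{s}/s=\d{t}$ and $s\in[e^{-T},e^T]$ becomes $t\in[-T,T]$) one obtains
\[ \widehat{\mu_T}(x,y)=\frac{1}{2T}\int_{-T}^{T}\cos\bigl(2\pi(xe^{t}+ye^{-t})\bigr)\,\d{t}. \]
Writing $\phi(t)=2\pi(xe^{t}+ye^{-t})$, it therefore suffices to establish the uniform lower bound $\int_{-T}^{T}\cos\phi(t)\,\d{t}\geq-16\sqrt{2/\pi}$, since dividing by $2T$ reproduces exactly the claimed estimate.

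The main obstacle is that one \emph{cannot} bound $\bigl|\int_{-T}^{T}e^{i\phi(t)}\,\d{t}\bigr|$ by a constant uniformly in $(x,y)$ and $T$: when $x,y\to 0$ the phase is nearly constant and the integral tends to $2T$. A genuine two-sided oscillatory estimate is thus unavailable, and the argument must exploit that only a lower bound is needed. The structural fact that makes this work is the identity $\phi''(t)=\phi(t)$, valid because both $e^{t}$ and $e^{-t}$ solve $f''=f$. Hence $|\phi''(t)|=|\phi(t)|$ everywhere, which ties the size of the second derivative (what van der Corput's lemma requires) to the value of the phase (what controls the sign of $\cos\phi$).

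With this identity I would split $[-T,T]$ along the band $\{\,|\phi|<\pi/2\,\}$ and its complement. Where $|\phi(t)|<\pi/2$ one has $\cos\phi(t)\geq 0$, so this part contributes nonnegatively. On the complement $\{\,|\phi|\geq\pi/2\,\}$ the identity forces $|\phi''|=|\phi|\geq\pi/2$; so van der Corput's second–derivative lemma (which gives $\bigl|\int_{a}^{b}e^{i\phi}\,\d{t}\bigr|\leq 8/\sqrt{\mu}$ whenever $\phi''$ keeps one sign and $|\phi''|\geq\mu>0$) bounds the exponential integral over each such piece by $8/\sqrt{\pi/2}=8\sqrt{2/\pi}$, whence the real part, i.e.\ the contribution of $\cos\phi$, is at least $-8\sqrt{2/\pi}$. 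Summing the nonnegative middle contribution with at most two tail estimates then yields $\int_{-T}^{T}\cos\phi\,\d{t}\geq-16\sqrt{2/\pi}$.

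The one point that needs care is the combinatorics of this decomposition: I must verify that in each sign regime $\{\,|\phi|\geq\pi/2\,\}\cap[-T,T]$ is a union of at most two intervals on each of which $\phi''$ does not change sign, so that the lemma applies cleanly and the constant $8\sqrt{2/\pi}$ is incurred at most twice. This follows from the identity again: where $\phi>0$ the function is convex and where $\phi<0$ it is concave, and the equation $\phi(t)=c$ becomes, after multiplication by $e^{t}$, a quadratic in $e^{t}$ and so has at most two roots, so $\phi$ meets each level at most twice. Using the symmetries $(x,y)\mapsto(-x,-y)$ (which fixes $\cos\phi$ since it sends $\phi\mapsto-\phi$) and $(x,y)\mapsto(y,x)$ together with $t\mapsto-t$, the verification reduces to the cases $x,y>0$ (then $\phi$ is convex with a single minimum) and $x>0>y$ (then $\phi$ is strictly monotone), with the degenerate cases where a coordinate vanishes handled identically.
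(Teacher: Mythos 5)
Your proposal is correct and follows essentially the same route as the paper's proof: the same folding of the two half-branches into $\widehat{\mu_T}(x,y)=\frac{1}{2T}\int_{-T}^{T}\cos\bigl(2\pi(xe^{t}+ye^{-t})\bigr)\,\d{t}$, the same decomposition of $[-T,T]$ along the band where $|\phi|\le\pi/2$ (so $\cos\phi\ge 0$) versus its at most two complementary intervals, and the same application of van der Corput's second-derivative lemma via the identity $\phi''=\phi$, with the identical constants $8\sqrt{2/\pi}$ per piece and $16\sqrt{2/\pi}$ in total. Your case analysis confirming that the band is a single interval merely fills in a detail the paper asserts with ``clearly,'' and is a welcome addition rather than a deviation.
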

For the proof we will need the van der Corput's lemma~\cite[Chapter VIII, Proposition 2]{Stein}.
\begin{lemma}\label{vdC}
Let $ \phi: [a,b] \to \RR$ be a function satisfying $|\phi''(t)|\ge \lambda>0$ for all $t \in [a,b]$. Then 
\[ \left|   \int_{a}^{b} e^{i \phi(t)}\, \d{t}  \right| \le 8 \lambda^{-1/2}. \] 
\end{lemma}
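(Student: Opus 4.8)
The plan is to reduce the second-derivative estimate to the classical first-derivative (integration-by-parts) estimate and then balance a trivial bound against it. First I would isolate the \textbf{first-derivative lemma}: if $\psi$ is real-valued on $[\alpha,\beta]$ with $\psi'$ monotone and $|\psi'(t)|\ge\rho>0$ throughout, then $\left|\int_\alpha^\beta e^{i\psi(t)}\,\d{t}\right|\le 3/\rho$. This is proved by writing $e^{i\psi}=\frac{1}{i\psi'}\frac{d}{dt}e^{i\psi}$ and integrating by parts. The boundary term $\left[\frac{e^{i\psi}}{i\psi'}\right]_\alpha^\beta$ has modulus at most $1/|\psi'(\alpha)|+1/|\psi'(\beta)|\le 2/\rho$, while the remaining integral $\int_\alpha^\beta e^{i\psi}\,\frac{d}{dt}\!\left(\frac{1}{i\psi'}\right)\d{t}$ is bounded by the total variation of $1/\psi'$; since $\psi'$ is monotone and nonvanishing, $1/\psi'$ is monotone, so this variation telescopes to $\left|\frac{1}{\psi'(\beta)}-\frac{1}{\psi'(\alpha)}\right|\le 1/\rho$. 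Adding gives the constant $3/\rho$.

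Next I would set up the lemma itself. Since $\phi$ is $C^2$ and $|\phi''|\ge\lambda>0$, continuity forces $\phi''$ to keep a constant sign; replacing $\phi$ by $-\phi$ if necessary (which conjugates the integral and leaves its modulus unchanged) I may assume $\phi''\ge\lambda$, so that $\phi'$ is strictly increasing. I would then select a point $c\in[a,b]$ at which $|\phi'|$ is smallest — the zero of $\phi'$ if one lies in $[a,b]$, and otherwise the appropriate endpoint — and observe that $\phi'(t)-\phi'(c)=\int_c^t\phi''$ yields the uniform lower bound $|\phi'(t)|\ge\lambda\,|t-c|$ for every $t\in[a,b]$.

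With this in hand I would fix a parameter $\delta>0$ and split $[a,b]$ into the central piece $[a,b]\cap(c-\delta,c+\delta)$ and (at most two) outer pieces. On the central piece I use the trivial bound by its length, which is at most $2\delta$. On each outer piece one has $|\phi'|\ge\lambda\delta$ and $\phi'$ monotone of constant sign, so the first-derivative lemma applies with $\rho=\lambda\delta$, contributing at most $3/(\lambda\delta)$ each, hence $6/(\lambda\delta)$ in total. This gives $\left|\int_a^b e^{i\phi(t)}\,\d{t}\right|\le 2\delta+\frac{6}{\lambda\delta}$, and the clean choice $\delta=\lambda^{-1/2}$ produces exactly $2\lambda^{-1/2}+6\lambda^{-1/2}=8\lambda^{-1/2}$, which is the asserted bound.

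The main obstacle is essentially bookkeeping rather than any deep difficulty: establishing the first-derivative estimate with the sharp constant $3$ (controlling the variation term via monotonicity) and checking that the bound $|\phi'(t)|\ge\lambda|t-c|$ and the splitting argument survive every degenerate configuration — no interior zero of $\phi'$, a central interval protruding past an endpoint of $[a,b]$, or an empty outer piece. Each such degeneracy only shrinks the relevant integrals, so the combined estimate $2\delta+6/(\lambda\delta)$ persists unchanged; the single step genuinely requiring care is the constant-sign reduction for $\phi''$, which is exactly where the twice-continuous differentiability of $\phi$ is used.
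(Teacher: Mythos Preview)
The paper does not supply its own proof of this lemma; it simply cites Stein's \emph{Harmonic Analysis} (Chapter~VIII, Proposition~2) as a black box. Your argument is correct and is in fact precisely the standard proof one finds in that reference: the first-derivative lemma via integration by parts with constant~$3$, the constant-sign reduction for $\phi''$, the lower bound $|\phi'(t)|\ge\lambda|t-c|$, and the splitting at scale $\delta=\lambda^{-1/2}$ to balance the trivial and oscillatory contributions. So there is nothing to compare---you have reproduced the cited proof rather than offered an alternative to the paper's own.
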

\begin{proof}[Proof of Lemma~\ref{ocslliatory}]
A simple calculation shows that 
$$
\widehat{\mu_T}(x,y)=\frac{1}{4T}\left(\int_{e^{-T}}^{e^T} \frac{e^{-2\pi i (sx+y/s)}}{s}\, \d{s}+\int_{e^{-T}}^{e^T} \frac{e^{2\pi i (sx+y/s)}}{s}\, \d{s}\right)=\frac{1}{2 T}\int_{e^{-T}}^{e^T} \frac{\cos (2\pi(xs+y/s))}{s}\, \d{s}.
$$
Using the substitution $s=e^{t}$, we have
 \[ \widehat{\mu_T}(x,y)=\frac{1}{2 T}\int_{- T}^{  T} \cos( 2\pi(xe^{t}+ y e^{-t})) \, \d{t}. \] 
Given $x,y \in \RR$, define 
\[ I= \left\{ -T \le t \le T: - \frac{1}{2} \le 2(xe^{t}+ y e^{-t})\le \frac{1}{2} \right\}, \]
and $I'= [-T, T ] \setminus I$.
Clealy $I$ is a closed interval and $I'$ is the union of at most two 
intervals $I'_{1}$ and possibly $I'_{2}$. Let $\phi(t)=2\pi(xe^{t}+ y e^{-t})$. By definition, for $t \in I'$, we have
$ |\phi''(t)|= |\phi(t)| \ge  \pi/2$. Now by applying Lemma~\ref{vdC} to the intervals $I'_{1}$ and $I'_{2}$  we have
\[  \left|   \int_{I'_{j}} e^{ i \phi(t)}\, \d{t}  \right| \le
8 \sqrt{2/\pi}, \qquad j=1,2.  \]
Extracting the real part, we obtain 
\begin{equation}\label{Eq2}
 \left|   \int_{I'} \cos( 2\pi( xe^{t}+ y e^{-t}) ) \, \d{t}  \right| \le
16 \sqrt{2/\pi}.  
\end{equation}
On the other hand, for $t \in I$, we have $\cos(2\pi( xe^{t}+ y e^{-t}))\geq 0$ and so 
\begin{equation}\label{Eq3}
\int_{I} \cos( 2\pi( xe^{t}+ y e^{-t}) ) \, \d{t} \ge 0.
\end{equation}
Combining~\eqref{Eq2} and~\eqref{Eq3} we obtain
\[ \int_{-T}^{T} \cos( 2\pi( xe^{t}+ y e^{-t}) )\, \d{t} \ge -16 \sqrt{2/\pi}, \]
which implies that 
\[ \widehat{\mu_T}(x,y)=\frac{1}{2T} \int_{-T}^{T} \cos( 2\pi( xe^{t}+ y e^{-t}) )\, \d{t} \ge \frac{-8 \sqrt{2/\pi}}{T}. \] 
\end{proof}
\begin{proof}[Proof of Theorem \ref{spec-real}] Notice that $\mu_T$ is a symmetric measure with $\mathrm{Supp}(\mu_T)\subseteq \mathcal{S}_T$. Since $\mu_T$ is a probability measure then obviously we have 
\begin{equation}\label{=1}
\sup_{(x,y)\in \RR^2}\widehat{\mu_T}(x,y)=1.
\end{equation}
From Lemma~\ref{m=inf} we have $m(A_{\mu_T})=\inf_{(x,y)\in \RR^2}\widehat{\mu_T}(x,y)$, and so by Remark~\ref{positive-negative} and Lemma~\ref{ocslliatory} we have 
\begin{equation}\label{<0}
-\frac{8 \sqrt{2/\pi}}{T}\leq\inf_{(x,y)\in \RR^2}\widehat{\mu_T}(x,y)<0.
\end{equation}
Hence by Theorem~\ref{spectral}, \eqref{=1} and~\eqref{<0} we obtain
$$
1+\frac{ T}{8\sqrt{2/\pi}}\leq 1-\frac{1}{\inf_{(x,y)\in\RR^2}\widehat{\mu_T}(x,y)}\leq \bchi(\Cay(\RR^2,\mathcal{S}_T)),
$$
establishing our theorem. 
\end{proof}
For any $T>0$, obviously 
$\bchi(\RR^2,\mathcal{S}_T)\leq \bchi(\hgraph{\RR}),$
and so $\bchi(\hgraph{\RR})=\infty$ which implies that $\bchi(\qgraph{\Q})=\infty$ by Lemma~\ref{hyper-quad},  if $\Q=(V,Q)$ is an isotropic quadratic space over $\RR$.
\subsection{The case of isotropic $p$-adic quadratic forms }
The proof in the $p$-adic case is similar to the one in the real case. We first remark that for a $p$-adic field $F$ we have 
$\bchi(\hgraph{F})\geq \bchi(\hgraph{\QQ_p})$.
Hence in order to prove Theorem~\ref{Borel-main-theorem},
analogous to the real case,  it suffices to show that the Borel chromatic number of hyperbola graphs $\hgraph{\QQ_p}$ is infinite. 
We also recall that the ring of integers of $\QQ_p$ is denoted by $\ZZ_p$ and the maximal ideal of $\ZZ_p$ by $\p$. We will fix the uniformizer $\varpi=p$ which satisfies $\|\varpi\|_p=1/p$. Hence 
$$\p^k=\{x\in \QQ_p: \|x\|_p\leq p^{-k}\},$$ and we have the the filtration 
$$
\QQ_p\supseteq\cdots\supseteq \p^{-2}\supseteq \p^{-1}\supseteq \p^0=\ZZ_p\supseteq\p^1\supseteq \p^2\supseteq\cdots\; . 
$$
For $k\in \ZZ$, define 
$$
\C_k:=\{s\in \QQ_p: \|s\|_p=p^k\}. 
$$
Notice that $\C_k=\p^{-k}\setminus \p^{-(k-1)}$,
and so $|\C_k|=p^k-p^{k-1}$ where $|\C_k|$ denotes the Haar measure of $\C_k$.
For an integer $T\geq 1$ consider the measurable set 
\begin{equation}
\mathcal{S}_T=\left\{(x,y)\in \QQ_p^2: xy=1,\, p^{-T}\leq \|x\|_p\leq p^T\right\}.
\end{equation}
Note that $\mathcal{S}_T$ is a bounded symmetric set which does not contain the origin in its closure. Similar to the real case, the Borel chromatic number of {\it T-chopped hyperbola graph} $\Cay(\QQ_p^2,\mathcal{S}_T)$ is finite by Lemma~\ref{finite-chro}.
\begin{theorem}\label{spec-p-adic} Let $\Cay(\QQ_p^2,\mathcal{S}_T)$ be the T-chopped hyperbola graph. Then 
\begin{equation}
\bchi(\Cay(\QQ_p,\mathcal{S}_T))\geq 1+(2T+1)/4.
\end{equation}
\end{theorem}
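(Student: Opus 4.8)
The plan is to mirror the real case (Theorem~\ref{spec-real}) step by step, replacing the arc-length measure $\d{s}/s$ on the hyperbola by the multiplicative Haar measure $\d{s}/\|s\|_p$, and replacing van der Corput's lemma by an explicit evaluation of the resulting $p$-adic oscillatory integrals. First I would introduce, for a Borel set $E \subseteq \QQ_p^2$, the measure
\[
\mu_T(E) = \frac{1}{(2T+1)(1-1/p)} \int_{p^{-T} \le \|s\|_p \le p^T} \1_E(s, 1/s)\, \frac{\d{s}}{\|s\|_p},
\]
and check that it is a symmetric (via $s \mapsto -s$) probability measure supported on $\mathcal{S}_T$; the normalizing constant is exactly the total mass $\sum_{k=-T}^{T}\int_{\C_k}\d{s}/\|s\|_p = (2T+1)(1-1/p)$. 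Since $\mu_T$ is a symmetric probability measure, Lemma~\ref{m=inf} gives $M(A_{\mu_T}) = \sup_{u}\widehat{\mu_T}(u) = 1$, so by Theorem~\ref{spectral} everything reduces to a lower bound for $\inf_{(x,y)}\widehat{\mu_T}(x,y)$, the exact $p$-adic analogue of Lemma~\ref{ocslliatory}.

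Next I would compute $\widehat{\mu_T}$. Decomposing the domain into the shells $\C_k$ ($-T \le k \le T$) and parametrizing $\C_k$ by $s = p^{-k}u$ with $u \in \ZZ_p^{\times}$ (under which $\d{s}/\|s\|_p$ becomes the Haar measure $\d{u}$ on $\ZZ_p^{\times}$), one obtains
\[
\widehat{\mu_T}(x,y) = \frac{1}{(2T+1)(1-1/p)} \sum_{k=-T}^{T} K(p^{-k}x,\, p^{k}y), \qquad K(a,b) := \int_{\ZZ_p^{\times}} \overline{\psi}(au + b/u)\, \d{u}.
\]
The integral $K(a,b)$ is the $p$-adic Bessel (Kloosterman) integral alluded to in the introduction; the substitution $u \mapsto -u$ shows it is real-valued, and trivially $|K(a,b)| \le |\ZZ_p^{\times}| = 1 - 1/p$.

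The heart of the argument is the evaluation of $K(a,b)$ by non-archimedean stationary phase. Writing $m = v(a)$ and $n = v(b)$ for the valuations, I expect the dichotomy: if $\min(m,n) \ge 0$ then the integrand is trivial and $K = 1 - 1/p$; if $\min(m,n) = -1$ and $m \ne n$ then $K = -1/p$; and in all remaining cases with $m \ne n$ one has $K = 0$. The vanishing is the crucial point: after partitioning $\ZZ_p^{\times}$ into residue classes modulo $p^{-\min(m,n)}$, the ``top'' digit of $u$ enters the phase only linearly with a unit coefficient, so summing over it is a complete additive character sum and annihilates the integral unless $m = n$. For the single diagonal shell $m = n < 0$, which occurs for at most one value of $k$ (namely $k = (\alpha-\beta)/2$, where $\alpha = v(x)$ and $\beta = v(y)$), I would simply invoke the trivial bound $K \ge -(1-1/p)$. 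Putting $m_k = \alpha - k$ and $n_k = \beta + k$, every shell with $\min(m_k,n_k)\ge 0$ contributes $+(1-1/p)$, at most two ``boundary'' shells contribute $-1/p$ each, at most one diagonal shell contributes $\ge -(1-1/p)$, and all others contribute $0$; hence $\sum_{k} K(p^{-k}x, p^k y) \ge -(1 + 1/p)$ uniformly in $(x,y)$. Dividing by $(2T+1)(1-1/p)$ yields $\widehat{\mu_T}(x,y) \ge -\tfrac{4}{2T+1}$ (in fact the sharper $-\tfrac{p+1}{(2T+1)(p-1)}$). I would emphasize that this bookkeeping uses $p$ odd nowhere, since the boundary and off-diagonal evaluations are genuine complete character sums and the diagonal shell is handled by the trivial bound, so no separate treatment of $p=2$ is required.

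Finally, combining $\sup \widehat{\mu_T} = 1$ with $-\tfrac{4}{2T+1} \le \inf \widehat{\mu_T} < 0$ (the strict negativity coming from Remark~\ref{positive-negative}) and feeding this into Theorem~\ref{spectral} gives
\[
\bchi(\Cay(\QQ_p^2,\mathcal{S}_T)) \ge 1 - \frac{1}{\inf_{(x,y)}\widehat{\mu_T}(x,y)} \ge 1 + \frac{2T+1}{4},
\]
as desired. The main obstacle is precisely the stationary-phase evaluation of $K(a,b)$ --- especially establishing that it vanishes whenever $v(a) \ne v(b)$ --- since this is what confines the negative contributions to $O(1)$ shells and makes the lower bound decay like $1/(2T+1)$ rather than being overwhelmed by the $\Theta(T)$ oscillating terms.
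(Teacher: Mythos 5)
Your proof is correct, and its skeleton matches the paper's: you use the same probability measure on the chopped hyperbola (your one-sided normalization $(2T+1)(1-1/p)$ gives exactly the paper's measure with $L=(4T+2)(1-1/p)$, since the defining integral is invariant under $s\mapsto -s$, so the paper's explicit symmetrization is redundant), the same reduction through Lemma~\ref{m=inf}, Remark~\ref{positive-negative} and Theorem~\ref{spectral}, and the same quantity to bound, namely $J_1(x,y,T)=\sum_{k=-T}^{T}K(p^{-k}x,\,p^{k}y)$ in your shell notation. Where you genuinely diverge is the key oscillatory estimate (the paper's Lemma~\ref{Theorem-Bessel-In}). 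The paper substitutes $s\mapsto s/y$ to reduce to the single parameter $a=xy$, splits the range into the regions $I_1',I_2',I_3'$ according to $\|s\|_p$ versus $1$ and $1/\|a\|_p$, disposes of the two outer regions via Lemma~\ref{a-identity}, and for the middle region cites Sally--Taibleson (Lemma~\ref{Sally}: $F(r,w)\not\equiv 0$ with $1\le r<m$ forces $m$ even and $r=m/2$), so at most one shell survives and is bounded trivially by $1-1/p$. You instead evaluate each unit-shell integral $K(a,b)$ from scratch by the digit/stationary-phase argument; your trichotomy is correct (for $\min(v(a),v(b))\le -2$ and $v(a)\ne v(b)$ the top digit enters the phase linearly with coefficient of valuation exactly $-1$, while the corrections from expanding $1/u$ have valuation $v(b)-\min(v(a),v(b))-1\ge 0$, so a complete character sum kills the integral), and your single diagonal shell $k=(\alpha-\beta)/2$, handled by the trivial bound, is precisely the paper's surviving $r=m/2$ shell. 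What your route buys: it is self-contained (no external citation), visibly uniform in $p$ including $p=2$, and your worst-case bound $\sum_k K\ge -(1+1/p)$ is marginally sharper than the stated $J_1\ge -2$ (though the paper's internal computation also reaches $-2/p-(1-1/p)$ before rounding). What the paper's route buys is that the hard vanishing statement is localized in one quotable lemma rather than in valuation bookkeeping. Your final constants, $\inf\widehat{\mu_T}\ge -4/(2T+1)$ versus the paper's $-4/L$, both feed into Theorem~\ref{spectral} to give $\bchi(\Cay(\QQ_p^2,\mathcal{S}_T))\ge 1+(2T+1)/4$.
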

Let $T\geq 1$ be an integer. For a measurable set $E\subseteq \QQ_p^2$ we define the following measure 
$$
\mu_T(E)=\frac{1}{L}\left(\int_{p^{-T}\leq \|s\|_p\leq p^T} \frac{\1_E(s,1/s)}{\|s\|_p}\, \d{s}+\int_{p^{-T}\leq \|s\|_p\leq p^T} \frac{\1_{(-E)}(s,1/s)}{\|s\|_p}\, \d{s}\right),
$$   
where $L=(4T+2)(1-1/p)$. It is easy to verify that  the probability measure $\mu_T$ is a symmetric measure with $\mathrm{Supp}(\mu_T)\subseteq \mathcal{S}_T$. By a straightforward calculation we  obtain 
\begin{equation}
\widehat{\mu_T}(x,y)=\frac{1}{L}\int_{p^{-T}\leq \|s\|_p\leq p^{T}}(\psi(xs+y/s)+\overline{\psi}(xs+y/s))\frac{\d{s}}{\|s\|_p},
\end{equation} 
where $\psi$ is an additive character of $\QQ_p$ with the kernel $\ZZ_p$, defined in~\eqref{additive-char}. 
 Hence our aim is to estimate $\widehat{\mu_T}(x,y)$. Following~\cite{Sally-Taibleson}, for a positive integer $r$, we define {\it incomplete Bessel function} as follows 
$$
J_1(x,y,T):=\int_{p^{-T}\leq \|s\|_p\leq p^T} \overline{\psi}(xs+y/s)\frac{\d{s}}{\|s\|_p}; \qquad\qquad x,y\in \QQ_p,\,\, T\in\mathbb{N}.
$$
 Hence 
 $$
\widehat{\mu_T}(x,y)=\frac{J_1(x,y,T)+\overline{J_1(x,y,T)}}{L}=\frac{2J_1(x,y,T)}{L},
 $$ 
since $J_1(x,y,T)$ is a real number. 
 \begin{lemma}\label{Theorem-Bessel-In}
For an integer $T\geq 1$ and arbitrary $(x,y)\in \QQ_p^2$ we have 
\begin{equation}\label{-4L-ine}
-\frac{4}{L}\leq \widehat{\mu_T}(x,y).
\end{equation}
\end{lemma}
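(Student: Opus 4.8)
The plan is to reduce the claimed inequality to the single estimate $J_1(x,y,T)\ge -2$, since $\widehat{\mu_T}(x,y)=2J_1(x,y,T)/L$. I would first decompose the defining integral over the annulus into its constituent spheres. Writing the annulus as $\bigsqcup_{k=-T}^{T}\C_k$ and using $\|s\|_p=p^k$ on $\C_k$ together with the substitution $s=p^{-k}u$ with $u\in\ZZ_p^\times$ (so that $\d s=p^{k}\,\d u$), each spherical integral collapses to the Kloosterman-type unit integral
$$K(\alpha,\beta):=\int_{\ZZ_p^\times}\overline{\psi}(\alpha u+\beta u^{-1})\,\d u,\qquad \alpha=xp^{-k},\ \beta=yp^{k}.$$
The normalising weight $1/\|s\|_p$ is tailored precisely so that $p^{-k}\int_{\C_k}\overline{\psi}(xs+y/s)\,\d s=K(xp^{-k},yp^{k})$, whence $J_1(x,y,T)=\sum_{k=-T}^{T}K(xp^{-k},yp^{k})$. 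The key bookkeeping observation is that the sum of the two valuations, $v(xp^{-k})+v(yp^{k})=v(xy)$, is a constant $c$ independent of $k$; I would therefore index the terms by $i:=v(xp^{-k})$, whose companion valuation is then $c-i$.

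Next I would evaluate or bound $K(\alpha,\beta)$ according to the signs of $i=v(\alpha)$ and $j=v(\beta)=c-i$, using the measure-preserving involution $u\mapsto u^{-1}$ of $\ZZ_p^\times$ to get $K(\alpha,\beta)=K(\beta,\alpha)$, so that only the unordered pair $\{i,j\}$ matters. When both valuations are $\ge 0$ the integrand is identically $1$ and $K=1-1/p>0$. When exactly one is negative, say $v(\alpha)<0\le v(\beta)$, the factor $\overline{\psi}(\beta u^{-1})$ equals $1$ and a direct computation of $\int_{\ZZ_p^\times}\overline{\psi}(\alpha u)\,\d u$ yields $-1/p$ if $v(\alpha)=-1$ and $0$ if $v(\alpha)\le -2$. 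In all cases one has the trivial bound $|K(\alpha,\beta)|\le 1-1/p$, since $K$ is an integral of a unit-modulus function over a set of measure $1-1/p$.

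The crucial step, which I expect to be the main obstacle, is the case where both $i$ and $j$ are negative; this is the genuinely oscillatory regime and the $p$-adic counterpart of the region $I'$ handled by van der Corput's lemma in the real case. Here I would prove that $K(\alpha,\beta)=0$ whenever $i\neq j$. Assuming $i<j<0$ (without loss of generality, by symmetry), I would apply the translation $u\mapsto u+h$ with $h=p^{-j}\in p\ZZ_p$, which maps $\ZZ_p^\times$ bijectively onto itself and preserves Haar measure. Expanding $\beta(u+h)^{-1}=(\beta u^{-1})\sum_{n\ge 0}(-h/u)^n$ and tracking valuations, every $u$-dependent term of the resulting phase difference has nonnegative valuation (indeed $v(\beta h^{n}u^{-(n+1)})=j(1-n)\ge 0$ for $n\ge 1$), so the phase changes only by the constant $\alpha h$ modulo $\ZZ_p$. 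Since $v(\alpha h)=i-j<0$, we have $\overline{\psi}(\alpha h)\neq 1$, and invariance of the integral under the substitution forces $K(\alpha,\beta)=\overline{\psi}(\alpha h)\,K(\alpha,\beta)$, hence $K(\alpha,\beta)=0$. Consequently the only surviving both-negative term is the diagonal one with $i=j=c/2$ (possible only when $c$ is even and negative), and there is at most one such $k$.

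Finally I would assemble the estimate. The only possibly negative contributions come from two sources: the two mixed-sign terms whose negative valuation equals $-1$, each equal to $-1/p$ (these can occur only when $c\ge -1$), and the single diagonal term with $v(\alpha)=v(\beta)=c/2$ (which can occur only when $c\le -2$). These two scenarios are mutually exclusive according to the sign of $c$, every other term being either $0$ or the positive value $1-1/p$. In the first scenario $J_1\ge -2/p$, and in the second $J_1\ge -(1-1/p)$ by the trivial modulus bound; restricting the range to $-T\le k\le T$ can only delete terms, which is harmless since our lower bound never relies on the positive ones. In every case $J_1(x,y,T)>-2$, giving $-4/L\le\widehat{\mu_T}(x,y)$ as claimed, with the degenerate cases $x=0$ or $y=0$ subsumed under the convention $v(0)=+\infty$. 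I would emphasise that, in contrast to the real case, no deep oscillatory-integral estimate (no $p$-adic Weil-type bound) is needed for the surviving diagonal term: the trivial bound $|K|\le 1-1/p$ suffices once the off-diagonal cancellation has been established.
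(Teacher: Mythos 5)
Your proof is correct, and it shares the paper's overall skeleton: split off the non-oscillatory contributions (which produce the $-1/p$ terms), show that every genuinely oscillatory piece vanishes except possibly a single ``diagonal'' one, and bound that one trivially by $1-1/p$. The genuine difference lies in how the crucial cancellation is obtained. After reducing to $a=xy$, the paper cuts the domain into the three regions $I_1',I_2',I_3'$, handles the first two by the elementary character computation (its Lemma~\ref{a-identity}, which matches your mixed-sign evaluation of $\int_{\ZZ_p^\times}\overline{\psi}(\alpha u)\,\d{u}$), and for the oscillatory region $I_3'$ it \emph{cites} Sally--Taibleson (quoted as Lemma~\ref{Sally}) for the fact that $F(r,w)\not\equiv 0$ only when $r=m/2$. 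You instead decompose sphere by sphere into unit Kloosterman-type integrals $K(\alpha,\beta)$ over $\ZZ_p^\times$ and \emph{prove} the off-diagonal vanishing yourself via the translation $u\mapsto u+p^{-j}$: the substitution preserves $\ZZ_p^\times$ and its Haar measure, your valuation count $v(\beta h^n u^{-(n+1)})=j(1-n)\ge 0$ for $n\ge 1$ correctly places the $u$-dependent part of the phase shift in $\ZZ_p$, and since $v(\alpha p^{-j})=i-j<0$ the identity $K=\overline{\psi}(\alpha p^{-j})K$ forces $K=0$. So your argument is self-contained exactly where the paper outsources its key input to the literature; the statement you prove is precisely the vanishing direction of Lemma~\ref{Sally} after the substitution $s=p^{-r}u$. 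Two further small dividends of your per-sphere bookkeeping with $i+j=c=v(xy)$: the mutual exclusivity of the two $-1/p$ terms (only for $c\ge -1$) and the diagonal term (only for $c\le -2$) quietly yields the sharper bound $J_1(x,y,T)\ge -1$, compared with the paper's case-by-case bounds $-2/p$ and $-2/p-(1-1/p)$; and the convention $v(0)=+\infty$ subsumes the cases $x=0$ or $y=0$ that the paper treats separately. Either way, $J_1\ge -2$ holds and $\widehat{\mu_T}=2J_1/L$ gives the claimed inequality.
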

To prove this theorem we first need a few facts involving $p$-adic integrals:
\begin{lemma} For $k\in \ZZ$ we have
\begin{equation}\label{p-adic-int-ide}
\int_{\p^k} \psi(as)\, \d{s}=
\begin{cases}
p^{-k} & \varpi^k a\in \ZZ_p,\\
0 & \text{otherwise}.
\end{cases}
\end{equation}
\end{lemma}
\begin{proof}
Let $\varpi^k a\in \ZZ_p$. Then $as\in \ZZ_p$ for all $s\in \p^k$ and so $\psi(as)=1$ which implies the first case. Suppose $\varpi^k a \not\in \ZZ_p$. Then $\psi_a(s):=\psi(as)$ is a non-trivial character on $\p^k$ and so its integral over $\p^k$ is zero.
\end{proof}
A direct consequence of this lemma is the following identity.
\begin{lemma}\label{a-identity} For $a\in \QQ_p$ with $\|a\|_p=p^{n}$ and $k\in \ZZ$ we have
\begin{equation}\label{u-ide}
\int_{\C_k} \psi(as)\frac{\d{s}}{\|s\|_p}=\int_{\C_k} \overline{\psi}(as)\frac{\d{s}}{\|s\|_p}=
\begin{cases}
1-\frac{1}{p} & k\leq -n,\\
-\frac{1}{p} & k=1-n,\\
0 & \text{otherwise}.
\end{cases}
\end{equation}
\end{lemma}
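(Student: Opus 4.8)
The plan is to reduce the computation entirely to the preceding lemma \eqref{p-adic-int-ide}, exploiting two elementary features of the sphere $\C_k$: that the weight $1/\|s\|_p$ is constant on it, and that $\C_k$ is a difference of two balls centered at the origin.

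First I would observe that every $s\in\C_k$ satisfies $\|s\|_p=p^k$, so the factor $1/\|s\|_p$ is the constant $p^{-k}$ and comes out of the integral. Using the decomposition $\C_k=\p^{-k}\setminus\p^{-(k-1)}$ recorded in the text, this gives
\[
\int_{\C_k}\psi(as)\frac{\d{s}}{\|s\|_p}=p^{-k}\left(\int_{\p^{-k}}\psi(as)\,\d{s}-\int_{\p^{-(k-1)}}\psi(as)\,\d{s}\right).
\]
Next I would feed each ball-integral into \eqref{p-adic-int-ide}. Because $\|a\|_p=p^n$, the condition $\varpi^m a\in\ZZ_p$ is equivalent to $m\ge n$, so $\int_{\p^m}\psi(as)\,\d{s}$ equals $p^{-m}$ when $m\ge n$ and vanishes otherwise. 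Setting $m=-k$ and $m=-(k-1)$, the first integral survives exactly when $k\le -n$ and the second exactly when $k\le 1-n$.

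It then remains to split into the three regimes $k\le -n$, $k=1-n$, and $k\ge 2-n$. In the first, both integrals are present and contribute $p^{k}-p^{k-1}$, which after multiplication by $p^{-k}$ becomes $1-1/p$. In the boundary case $k=1-n$ only the second integral survives, giving $-p^{k-1}$ and hence $-1/p$. In the last regime both integrals vanish, yielding $0$. This is precisely the asserted formula. For the equality of the $\psi$- and $\overline{\psi}$-integrals I would note that $\overline{\psi}(as)=\psi(-as)=\psi\bigl(a\cdot(-s)\bigr)$ and that the substitution $s\mapsto -s$ preserves both $\C_k$ (since $-\C_k=\C_k$) and the weight $1/\|s\|_p$; equivalently, the value just computed is real, and the same computation applies verbatim to $\overline{\psi}$.

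The argument is essentially bookkeeping, so I do not anticipate a genuine obstacle; the one point demanding care is the boundary case $k=1-n$, where exactly one of the two ball-integrals survives. There one must track the strict versus non-strict inequalities accurately so as not to misattribute the surviving term and thereby misplace the $-1/p$ value.
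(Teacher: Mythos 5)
Your proof is correct and follows essentially the same route as the paper's: decompose $\C_k=\p^{-k}\setminus\p^{-(k-1)}$, pull out the constant weight $p^{-k}$, and apply the ball-integral identity~\eqref{p-adic-int-ide}. You simply make explicit the case bookkeeping (and the symmetry argument for $\overline{\psi}$) that the paper leaves to the reader.
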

\begin{proof}
Since $\C_k=\p^{-k}\setminus \p^{-(k-1)}$ then
$$
\int_{\C_k}\psi(as)\, \d{s}=\int_{\p^{-k}}\psi(as)\, \d{s}-\int_{\p^{-(k-1)}}\psi(as)\, \d{s},
$$ 
and so by~\eqref{p-adic-int-ide} we obtain the equalities. 
\end{proof}
For a positive integer $r$ and $0\neq w\in \QQ_p$, define 
$$F(r,w)=\int_{\C_r}\psi(s)\psi(w/s)\frac{\d{s}}{\|s\|_p}=\int_{\C_r}\overline{\psi}(s)\overline{\psi}(w/s)\frac{\d{s}}{\|s\|_p}.$$
Evidently we have 
\begin{equation}\label{Evidently}
|F(r,w)|\leq 1-\frac{1}{p}\leq 1.
\end{equation}
\begin{lemma}\label{Sally} Suppose that $\|w\|_p=p^m$ and $1\leq r< m$. Then $F(r,w)\not\equiv 0$ as a function of $w$ if and only if $m$ is even and $r=m/2$.
\end{lemma}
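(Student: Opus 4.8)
The plan is to pass from the integral over the sphere $\C_r$ to an oscillatory integral over the unit group $\OO^\times=\ZZ_p^\times$, and then to separate a \emph{non-stationary} regime, which forces vanishing, from a \emph{stationary} regime, which forces non-vanishing; the dividing line will turn out to be exactly $m=2r$. First I would normalize. Since $\|s\|_p=p^r$ is constant on $\C_r$, the substitution $s=\varpi^{-r}u$ with $u\in\OO^\times$ scales Haar measure by $\|\varpi^{-r}\|_p=p^{r}$, so that $\frac{\d s}{\|s\|_p}=\d u$; writing also $w=\varpi^{-m}v$ with $v\in\OO^\times$ turns the phase into $s+w/s=\varpi^{-r}u+\varpi^{-(m-r)}vu^{-1}$. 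Hence
\[
F(r,w)=\int_{\OO^\times}\psi\!\left(\varpi^{-r}u+\varpi^{-(m-r)}vu^{-1}\right)\d u .
\]
Setting $\alpha=r$ and $\beta=m-r$, both are $\ge 1$ because $1\le r<m$, and the condition ``$m$ even with $r=m/2$'' is precisely $\alpha=\beta$.

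Next I would treat the non-stationary case $\alpha\neq\beta$ (equivalently $m\neq 2r$), which yields the ``only if'' direction. Assume $\alpha>\beta$, the case $\alpha<\beta$ being symmetric; then $\alpha\ge 2$, so I partition $\OO^\times$ into the cosets $c+\varpi^{\alpha-1}\OO$, with $c$ ranging over unit representatives modulo $\varpi^{\alpha-1}$. Writing $u=c+\varpi^{\alpha-1}t$ with $t\in\OO$, a one-line expansion shows that, modulo $\OO$, the second term $\varpi^{-\beta}vu^{-1}$ is constant in $t$ (its $t$-dependence has valuation $\ge\alpha-\beta-1\ge 0$), while the first term contributes $\varpi^{-\alpha}c+\varpi^{-1}t$. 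Thus on each coset the phase factors as $\psi(\varpi^{-r}c+\varpi^{-\beta}vc^{-1})\,\psi(\varpi^{-1}t)$, and integrating in $t$ gives $\int_{\OO}\psi(\varpi^{-1}t)\,\d t=0$ by the $k=0$ instance of \eqref{p-adic-int-ide}. Every coset integral vanishes, so $F(r,w)=0$ for all $w$ with $\|w\|_p=p^m$.

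Finally, the stationary case $\alpha=\beta=r$, i.e. $m=2r$. Here I would choose $v$ to be a square unit and put $u_0=\sqrt v\in\OO^\times$, so that the \emph{exact} identity $u+vu^{-1}=2u_0+(u-u_0)^2u^{-1}$ holds. Localizing near the two critical points $u=\pm u_0$ (where $(u\mp u_0)^2u^{-1}$ is small) and substituting $u=u_0 s$ reduces each local contribution to a classical quadratic Gauss sum modulo $p$, of modulus $\sqrt p$ and hence nonzero. Because $u\mapsto -u$ sends the phase to its complex conjugate, $F(r,w)$ is real and takes the form $F(r,\varpi^{-2r}v)=2\,\mathrm{Re}\!\left(\psi(2u_0\varpi^{-r})\,g(v)\right)$, where $|g(v)|$ is a fixed positive constant (only a Legendre symbol and the measure factor depend on $v$). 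As $v$ varies over square units the phase $\psi(2u_0\varpi^{-r})$ is non-constant, so this expression cannot vanish for every $v$; therefore $F(r,\cdot)\not\equiv 0$.

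The hard part is exactly this last non-vanishing: I must rule out an accidental cancellation between the contributions of the two critical points $\pm u_0$. The safest route is (i) to exploit the freedom in $w$ and let $v$ range over square units, as above, so that a single non-constant phase defeats any fixed cancellation; as an independent cross-check one can use (ii) an $L^2$ computation, expanding $\int_{\|w\|_p=p^{2r}}|F(r,w)|^2\,\d w$ by Fubini and evaluating the inner $w$-integral with Lemma~\ref{a-identity}, where the positive diagonal contribution survives because the off-diagonal term carries the genuinely oscillating factor $\psi(s_1-s_2)$. For the boundary value $r=1$ the local integral is literally the Kloosterman sum $\sum_{u\in\FF_p^\times}\psi(p^{-1}(u+vu^{-1}))$, and summing these over $v$ gives $1\neq 0$, so once more $F(r,\cdot)\not\equiv 0$.
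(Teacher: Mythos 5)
You should know at the outset that the paper does not prove Lemma~\ref{Sally} at all: its ``proof'' is a citation to \cite[Lemma 8(i)]{Sally-Taibleson} and \cite[Lemma 10.10]{Sally}. So your self-contained argument is by construction a different route, and it is essentially the stationary-phase mechanism that underlies the cited evaluation. Your vanishing half is complete and correct: after the normalization $F(r,\varpi^{-m}v)=\int_{\OO^\times}\psi(\varpi^{-\alpha}u+\varpi^{-\beta}vu^{-1})\,\d{u}$ with $\alpha=r$, $\beta=m-r\ge 1$, the decomposition of $\OO^\times$ into cosets $c+\varpi^{\alpha-1}\OO$ (for $\alpha>\beta$, whence $\alpha\ge2$) does make the second phase term constant modulo $\ZZ_p$ in the coset parameter $t$ (all $t$-dependent terms have valuation at least $\alpha-1-\beta\ge0$), and the inner integral dies by the $k=0$ case of \eqref{p-adic-int-ide}; the case $\alpha<\beta$ follows by the measure-preserving substitution $u\mapsto vu^{-1}$. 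This works for every $p$, including $p=2$, and it is worth noting that this vanishing direction is the only part of Lemma~\ref{Sally} the paper actually uses in Lemma~\ref{Theorem-Bessel-In} (there only the bound \eqref{Evidently} on the single surviving shell enters). What the citation buys the authors is brevity; what your argument buys is a complete, readable proof.

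Two points on the non-vanishing half, where your main route is stated too loosely. First, since the argument of $g(v)$ itself changes with $v$ (through the Legendre symbol when $r$ is odd), non-constancy of the phase $\psi(2u_0\varpi^{-r})$ does not by itself rule out $\mathrm{Re}\bigl(\psi(2u_0\varpi^{-r})g(v)\bigr)\equiv 0$: the vanishing of each real part only constrains each attained phase to an antipodal pair $\{z,-z\}$, and a sign flip in $g(v)$ preserves that pair. You must add the observation that, for odd $p$, as $u_0$ ranges over $\OO^\times$ the phases run through all primitive $p^r$-th roots of unity, no two of which are negatives of each other (and there are at least two of them), so they cannot all lie in one antipodal pair; with that sentence the argument closes, but only for odd $p$ --- for $p=2$ the square-root and Gauss-sum structure is different and this route breaks down. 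Second, your $L^2$ ``cross-check'' is in fact the cleanest complete proof and should be promoted to primary: Fubini plus Lemma~\ref{a-identity} gives exactly $\int_{\|w\|_p=p^{2r}}|F(r,w)|^2\,\d{w}=p^{r}(1-1/p)$ for $r\ge 2$, and a similarly explicit positive value for $r=1$; note, however, that your heuristic for why this is positive is not quite what happens --- the off-diagonal shell $\|s_1-s_2\|_p=p$ does not ``oscillate away'' but contributes the definite positive amount $p^{r-1}(1-1/p)$, since $\int_{\C_1}\psi(t)\,\d{t}=-1$ meets the prefactor $-1/p$. This computation is valid for all $r\ge1$ and all $p$ including $p=2$, so it repairs both gaps at once. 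Your $r=1$ identity $\sum_{v\in\FF_p^\times}\Kl(1,v;p)=1$ is correct and a nice independent confirmation, with $F(1,\varpi^{-2}v)=\frac1p\Kl(1,v;p)$.
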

\begin{proof}
See~\cite[Lemma 8(i)]{Sally-Taibleson} also see~\cite[Lemma 10.10]{Sally}. 
\end{proof}
\begin{proof}[Proof of Lemma~\ref{Theorem-Bessel-In}] Throughout this proof we will let $T\geq 1$ to be an integer. We show that for any $(x,y)\in\QQ_p^2$:
\begin{equation}\label{-2-in}
J_1(x,y,T)\geq -2,
\end{equation}
which from it we immediately deduce~\eqref{-4L-ine}. 
Let $x\neq 0$, then from Lemma~\ref{a-identity} we have
$$
J_1(x,0,T)=\int_{p^{-T}\leq \|s\|_p\leq p^T} \overline{\psi}(xs)\frac{\d{s}}{\|s\|_p}\, \geq -\frac{1}{p}.
$$
Now let $y\neq 0$. Since $\frac{\d{s}}{\|s\|_p}$ is a Haar measure on the multiplicative group $\QQ_p^\times$ then we have 
\begin{equation}
J_1(0,y,T)=\int_{p^{-T}\leq \|s\|_p\leq p^T} \overline{\psi}(y/s)\frac{\d{s}}{\|s\|_p}=\int_{p^{-T}\leq \|s\|_p\leq p^T} \overline{\psi}(ys)\frac{\d{s}}{\|s\|_p},
\end{equation}
and so again by Lemma~\ref{a-identity} we obtain
$$
J_1(0,y,T)\geq -\frac{1}{p}.
$$
Therefore when $xy=0$ we obtain~\eqref{-2-in}. 
In the rest of the proof, we can thus assume that $xy\neq 0$. Let $\|y\|_p=p^{n_2}$. Using a change of variables we obtain 
$$
J_1(x,y,T)=\int_{q^{-T-n_2}\leq \|s\|_p\leq p^{T-n_2}} \overline{\psi}(xys+1/s)\frac{\d{s}}{\|s\|_p}=\int_{p^{\ell_1}\leq \|s\|_p\leq p^{\ell_2}} \overline{\psi}(as+1/s)\frac{\d{s}}{\|s\|_p},
$$
where $\ell_1=-(T+n_2)$, $\ell_2=T-n_2$ and $a=xy$. First assume $\|a\|_p\leq 1$ and denote 
$$I_1:=\{s\in \QQ_p: p^{\ell_1}\leq \|s\|_p\leq p^{\ell_2},\; \|s\|_p\leq 1\},\qquad I_2:=\{s\in \QQ_p: p^{\ell_1}\leq \|s\|_p\leq p^{\ell_2},\; \|s\|_p> 1\}.$$
Hence from Lemma~\ref{a-identity} we deduce that 
\begin{equation}
\begin{split}
J_1(x,y,T)&=\int_{I_1} \overline{\psi}(as+1/s)\frac{\d{s}}{\|s\|_p}+\int_{I_2} \overline{\psi}(as+1/s)\frac{\d{s}}{\|s\|_p}\\
&=\int_{I_1} \overline{\psi}(1/s)\frac{ds}{\|s\|_p}+\int_{I_2} \overline{\psi}(as)\frac{\d{s}}{\|s\|_p}\\
&=\int_{I_1^{-1}} \overline{\psi}(s)\frac{\d{s}}{\|s\|_p}+\int_{I_2} \overline{\psi}(as)\frac{\d{s}}{\|s\|_p}\geq -2/p. 
\end{split}
\end{equation}
Therefore when $\|xy\|_p\leq 1$ we obtain~\eqref{-2-in}.
Next assume that $\|xy\|_p=\|a\|_p>1$ and define 
\begin{equation}
\begin{split}
I_1'&=\{s\in \QQ_p: p^{\ell_1}\leq \|s\|_p\leq p^{\ell_2},\,\,\, \|s\|_p\geq 1\}\\
I_2'&=\{s\in \QQ_p: p^{\ell_1}\leq \|s\|_p\leq p^{\ell_2},\,\,\, \|s\|_p\leq 1/\|a\|_p\}\\
I_3'&=\{s\in \QQ_p: p^{\ell_1}\leq \|s\|_p\leq p^{\ell_2},\,\,\, 1/\|a\|_p<\|s\|_p< 1\}
\end{split}
\end{equation} 
Hence again form Lemma~\ref{a-identity} we have 
\begin{equation}\label{eq35}
\begin{split}
J_1(x,y,T)&=\int_{I'_1} \overline{\psi}(as)\frac{\d{s}}{\|s\|_p}+\int_{I'_2} \overline{\psi}(1/s)\frac{\d{s}}{\|s\|_p}+\int_{I'_3} \overline{\psi}(as+1/s)\frac{\d{s}}{\|s\|_p}\\
&=\int_{I'_1} \overline{\psi}(as)\frac{\d{s}}{\|s\|_p}+\int_{{I'_2}^{-1}} \overline{\psi}(s)\frac{\d{s}}{\|s\|_p}+\int_{I'_3} \overline{\psi}(as+1/s)\frac{\d{s}}{\|s\|_p}\\
&\geq -2/p+\int_{I'_3} \overline{\psi}(as)\overline{\psi}(1/s)\frac{\d{s}}{\|s\|_p}.
\end{split}
\end{equation}
To complete the proof of Lemma~\ref{Theorem-Bessel-In} it remains to evaluate the last integral. We remark that 
$$\int_{I'_3} \overline{\psi}(as)\overline{\psi}(1/s)\frac{\d{s}}{\|s\|_p},$$
 is a real number. Let $\|a\|_p=p^m$ for some $m\geq 1$. Hence we obtain the following finite sum  
\begin{equation}
\begin{split}
\int_{I'_3} \overline{\psi}(as)\overline{\psi}(1/s)\frac{\d{s}}{\|s\|_p}&=\sum_{k}\int_{\C_k} \overline{\psi}(as)\overline{\psi}(1/s)\frac{\d{s}}{\|s\|_p}
=\sum_{k}\int_{\C_{k+m}} \overline{\psi}(s)\overline{\psi}(a/s)\frac{\d{s}}{\|s\|_p}.
\end{split}
\end{equation}
notice that $1\leq m+k< m$ since $s\in I_3'$. Therefore by Lemma~\ref{Sally} and~\eqref{Evidently} we have
$$
\int_{I'_3} \overline{\psi}(as)\overline{\psi}(1/s)\frac{\d{s}}{\|s\|_p}\geq -(1-1/p),
$$
which by combining with~\eqref{eq35} proves~\eqref{-2-in} when $\|xy\|_p>1$. 
\end{proof}
Since $\mu_T$ is a probability measure then
$\sup_{(x,y)\in \QQ_p^2}\widehat{\mu_T}(x,y)=1$. 
Moreover, similar to the real case and Lemma~\ref{Theorem-Bessel-In} we also have 
$$-\frac{4}{L}\leq \inf_{(x,y)\in\QQ_p^2}\widehat{\mu_T}(x,y)< 0.$$
Therefore from Theorem~\ref{spectral} we have 
$$
1+\frac{2T+1}{4}\leq 1+\frac{L}{4}\leq\bchi(\Cay(\QQ^2,\mathcal{S}_T)),
$$
which proves Theorem~\ref{spec-p-adic}. Now evidently we have 
$$
\sup_{T\geq 1}\bchi(\Cay(\QQ_p^2,\mathcal{S}_T))\leq \bchi(\hgraph{\QQ_p}),
$$
and so $\bchi(\hgraph{\QQ_p})=\infty$. 
\subsection{The case of anisotropic quadratic forms}\label{anisotropic}
Let $F$ be a local field and let $\Q=(V,Q)$ be an anisotropic quadratic space with $\dim V \ge 2$. In this subsection we will show that $ \bchi(\qgraph{\Q}) < \infty$. 
\begin{proof}[Proof of Theorem \ref{Borel-main-theorem} for anisotropic forms]
Fix a norm on $F$ and denote this norm as well as the associated supremum norm on $V$ by $\|\cdot\|$. Set $S=\{v\in V: Q(v)=1\}$, hence $\qgraph{\Q}=\Cay(V,S)$. We first show that $S$ is a bounded set by proving that there exists a constant $A>0$ such that for every $v \in V$, if $\|v \| \ge A$, then $\|Q(v)\|>1$. By way of contradiction, assume that there exists a sequence 
$\{ v_m \}_{m \ge 1}$ in $V$ such that $\|v_m \| \to \infty$ as $m \to \infty$, while 
$\| Q(v_m) \|\le 1$ for all $m \ge 1$. For $m \ge 1$, choose $ \lambda_m \in F$ such that 
$\| \lambda_m\| \| v_m \|= \| \lambda_m v_m \| = 1$.
Hence $\lambda_m\to 0$, from which it follows that $Q( \lambda_m v_m)\to 0$. Let $w_m= \lambda_m v_m$. Since the unit sphere $\{ v \in V: \|v\|=1 \}$ is compact, after passing to a subsequence, we may assume that $w_m \to w$, where $\|w\|=1$. The limit point $w$ satisfies $ Q(w)=0$, which contradicts the assumption that $Q$ is anisotropic. Therefore $S$ is a bounded and measurable and so by Lemma~\ref{finite-chro}, $\bchi(\qgraph{\Q})$ is finite. 
\end{proof}

 \section{Application to the chromatic number of regular graphs}\label{application}
Let $R$ be a ring (always assumed to contain the unity) and $\zer(R)$ denote the set of zero-divisors in $R$; set also $\reg(R)= R \setminus \zer(R)$. The {\it regular graph} of $R$, first introduced in~\cite{anderson}, is the graph with the vertex set $\reg(R)$, in which distinct vertices $x,y \in \reg(R)$ are declared to be adjacent when $x+y \in \zer(R)$. The  special case $R=\M_n(F)$, where $F$ is a field and $\M_n(F)$ is the ring of $n\times n$ matrices over $F$. Here, the vertex set coincides with the general linear group $\GL_n(F)$, with two vertices $x,y \in \GL_n(F)$ forming an edge if $\det(x+y)=0$. We will denote this graph by $\Gamma_n(F)$. 
The study of these regular graphs was initiated in \cite{AJF}, where the authors proved that the clique number of $\Gamma_n(F)$
is bounded by a function of $n$, independent of the underlying field $F$, assuming that the characteristic of $F$ is not $2$. 
More precisely, it is shown in~\cite{AJF} that for a field $F$ of characteristic different from $2$, we have
\begin{equation}\label{clique-bound}
  \omega(\Gamma_n(F) )\leq  C_{n}, 
\end{equation}
where $C_{n}= \sum_{k=0}^{n}k! {n \choose k}^2$ and $\omega(\G)$ denotes the clique number of the graph $\G$. Note that $C_{n}$ does not depend on $F$. Since $\omega(\mathcal{G}) \le \chi(\mathcal{G})$ for every graph $\G$, it would be interesting to know whether or not an inequality of the form~\eqref{clique-bound} can be established that is also independent of $F$.  We can now prove Theorem \ref{app}:

\begin{proof}[Proof of Theorem \ref{app}]
Without loss of generality we can assume that $n=2$. 
For $x,y\in F$ define
\begin{equation}\label{axy}
a_{x,y}:= 
\begin{pmatrix}
1  & -2x    \\
 0 & 1 \\
\end{pmatrix} \begin{pmatrix}
1  & 0    \\
 2y & 1 \\
\end{pmatrix} = \begin{pmatrix}
1-4xy  & -2x    \\
 2y & 1 \\
\end{pmatrix} \in \GL_2(F).
\end{equation}
A simple computation shows that 
$$\det(a_{x_1,y_1}+a_{x_2,y_2})=4-4(x_2-x_1)(y_2-y_1).$$
Hence, vertices $a_{x_1,y_1}$ and $a_{x_2,y_2}$ of $\Gamma_{2}(F)$ are adjacent if and only if $(x_2-x_1)(y_2-y_1)=1$. Note also that since $F$ has characteristic zero, the map $(x,y) \mapsto a_{x,y}$ is a injective and continuous. This implies that the induced subgraph on the set 
$A= \{ a_{x,y}: x,y \in F \},$ is isomorphic to $\hgraph{F}$. Note that the alternative description of $A$ as
\[ A= \left\{ \begin{pmatrix}
x  & y    \\
 z & 1 \\
\end{pmatrix}: x,y,z \in F, x-yz=1 \right\}, \]
makes it clear that $A$ is closed, and hence Borel. This implies that the restriction of any finite Borel coloring of 
$\Gamma_{2}(F)$ to $A$ would yield in a Borel coloring of $\hgraph{F}$ and so $$\bchi(\Gamma_2(F))\geq \bchi(\hgraph{F})=\infty,$$ by Theorem~\ref{Borel-main-theorem}.
\end{proof}
\subsection*{Acknowledgement} During the completion of this work, M.B. was supported by a postdoctoral fellowship from the University of Ottawa. He wishes to thank
his supervisor Vadim Kaimanovich. The authors are grateful to Saeed Akbari, Boris Bukh, Camelia Karimianpour and Hadi Salmasian with whom the authors discussed various parts of this paper. 
\bibliographystyle{plain}

\Addresses
\end{document}